\theoremstyle{plain}
\newtheorem{lem}{Lemma}[section]
\newtheorem{cor}[lem]{Corollary}
\newtheorem*{mainthm*}{Main Theorem}
\newtheorem{prop}[lem]{Proposition}
\newtheorem{thm}[lem]{Theorem}
\newtheorem{intthm}{Theorem}
\theoremstyle{definition}
\newtheorem{disc}[lem]{Remark}
\newtheorem{para}[lem]{}
\newtheorem*{convention*}{Convention}
\newcommand{\pd}{\operatorname{pd}}	
\newcommand{\gdim}{\mathrm{G}\text{-}\!\dim}
\newcommand{\id}{\operatorname{id}}
\newcommand{\depth}{\operatorname{depth}}	
\newcommand{\rank}{\operatorname{rank}}
\newcommand{\soc}{\operatorname{Soc}}
\newcommand{\coker}{\operatorname{Coker}}
\newcommand{\im}{\operatorname{Im}}
\newcommand{\End}{\operatorname{End}}
\newcommand{\Ker}{\operatorname{Ker}}
\newcommand{\ideal}[1]{\mathfrak{#1}}
\newcommand{\m}{\ideal{m}}
\newcommand{\fm}{\ideal{m}}
\newcommand{\xra}{\xrightarrow}
\newcommand{\f}{\mathbf{f}}
\renewcommand{\geq}{\geqslant}
\renewcommand{\leq}{\leqslant}
\renewcommand{\ker}{\Ker}
\newcommand{\Ext}[4][R]{\operatorname{Ext}_{#1}^{#2}(#3,#4)}
\newcommand{\Hom}{\operatorname{Hom}}	
\newcommand{\Tor}[4][R]{\operatorname{Tor}^{#1}_{#2}(#3,#4)}
\def\Tor{\operatorname{Tor}}
\def\Ext{\operatorname{Ext}}
\def\soc{\operatorname{Soc}}
\def\diag{\operatorname{diag}}
\numberwithin{equation}{lem}
\begin{document}

\bibliographystyle{amsplain}

\title[Structure of irreducible homomorphisms to/from free modules]{Structure of irreducible homomorphisms \\to/from free modules}

\author{Saeed Nasseh}
\address{Department of Mathematical Sciences\\
Georgia Southern University\\
Statesboro, GA 30460, USA}
\email{snasseh@georgiasouthern.edu}
\urladdr{https://cosm.georgiasouthern.edu/math/saeed.nasseh}

\author{Ryo Takahashi}
\address{Graduate School of Mathematics\\
Nagoya University\\
Furocho, Chikusaku, Nagoya, Aichi 464-8602, Japan}
\email{takahashi@math.nagoya-u.ac.jp}
\urladdr{http://www.math.nagoya-u.ac.jp/~takahashi/}

\thanks{Takahashi was partly supported by JSPS Grants-in-Aid
for Scientific Research 16K05098.}



\keywords{Auslander-Reiten Conjecture, Ext-vanishing, Injective dimension, Irreducible homomorphism, Projective dimension, Regular ring, Tor-vanishing.}
\subjclass[2010]{13C10, 13D05, 13D07, 13H05}

\begin{abstract}
The primary goal of this paper is to investigate the structure of irreducible monomorphisms to and irreducible epimorphisms from finitely generated free modules over a noetherian local ring.
Then we show that over such a ring, self-vanishing of Ext and Tor for a finitely generated module admitting such an irreducible homomorphism forces the ring to be regular.
\end{abstract}

\maketitle


\section{Introduction}

\begin{convention*}
In this paper, $(R,\fm,k)$ is a commutative noetherian local ring and all modules are finitely generated.
\end{convention*}

A homomorphism $f\colon M\to N$ of $R$-modules is called \emph{irreducible} if
$f$ is neither a split monomorphism nor a split epimorphism, and
for every factorization $M\xra{g}L\xra{h}N$ of $f$ we have
$g$ is a split monomorphism or $h$ is a split epimorphism.
Irreducible homomorphisms are used in the theory of Auslander-Reiten sequences which was established in~\cite{AR1} and play a central role in representation theory of artin algebras. (Excellent references on these topics are~\cite{ARS, roger, yoshino}.)

In this paper we investigate the structure of irreducible monomorphisms to and irreducible epimorphisms from free modules over a commutative noetherian local ring. Section~\ref{sec05062016a} deals with the case where we have an irreducible monomorphism to a free module. Our main result in this section, stated next, is proven in~\ref{para20170617a} and~\ref{para20170617b}.

\begin{intthm}\label{thm16052016a}
The following assertions hold for the local ring $R$.
\begin{enumerate}[\rm(a)]
\item
If $I$ is a non-zero proper ideal of $R$ that is a direct summand of $\m$, then the inclusion map $I\to R$ is an irreducible homomorphism.

\item
Assume that $R$ is Henselian, and let $\phi\colon M\to F$ be an irreducible monomorphism of $R$-modules with $F$ free such that $\im(\phi)\subseteq \m F$. Then the following hold.
\begin{enumerate}[\rm(b1)]
\item
The $R$-module $M$ is isomorphic to a direct summand of $\m$.
\item
If $M$ is indecomposable, then for every surjection $\pi\colon F\to R$ there exists a split monomorphism $\eta\colon M\to \fm$ such that the diagram
$$
\xymatrix{
M\ar[r]^{\phi}\ar[d]_{\eta}&F\ar[d]^{\pi}\\
\fm\ar[r]^{\theta}&R
}
$$
commutes, where $\theta$ stands for the inclusion map.
\end{enumerate}
\end{enumerate}
\end{intthm}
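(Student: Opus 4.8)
The plan is as follows. Part (a) is elementary: the inclusion $\iota\colon I\into R$ is not a split epimorphism since $I$ is a proper ideal, and it is not a split monomorphism, since a retraction $\rho\colon R\to I$ would give $a=\rho(a)=a\,\rho(1)$ for every $a\in I$ with $\rho(1)\in I\subseteq\fm$, whence $1-\rho(1)$ is a unit and $I=0$. For a factorization $I\xra{g}L\xra{h}R$ of $\iota$ in which $h$ is not a split epimorphism, $h$ is not surjective (as $R$ is free), so $\im h$ is a proper ideal and hence lies in $\fm=I\oplus J$; composing $h$ with the projection $p\colon\fm\onto I$ along $J$ gives $(p h)g=\id_I$, so $g$ is a split monomorphism. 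This settles (a).

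For (b) I would isolate two facts. First, writing $\theta\colon\fm\into R$ for the inclusion: $\theta$ is not a split epimorphism, and every homomorphism $Y\to R$ that is not a split epimorphism factors through $\theta$, because its image is a proper ideal and hence lies in $\fm$. Consequently, any irreducible homomorphism $f\colon M\to R$ can be written $f=\theta\eta$ with $\eta\colon M\to\fm$, and since $\theta$ is not a split epimorphism, irreducibility of $f$ forces $\eta$ to be a split monomorphism; in particular $M$ is isomorphic to a direct summand of $\fm$. Second, and this is the crux: \emph{if $\phi\colon M\to F$ is irreducible with $F$ free and $\im\phi\subseteq\fm F$, then for every surjection $\pi\colon F\onto R$ the composite $\pi\phi\colon M\to R$ is again irreducible.} Granting this, (b1) follows by choosing any surjection $\pi\colon F\onto R$ (one exists as $F\neq0$, since $\phi$ is nonzero) and applying the first fact to the irreducible map $\pi\phi$; and (b2) follows by applying the same reasoning to the given surjection $\pi$, the resulting split monomorphism $\eta\colon M\to\fm$ satisfying $\theta\eta=\pi\phi$, which is exactly the claimed commutativity. (Indecomposability of $M$ is not needed for this conclusion itself; it only guarantees in addition that $\eta$ exhibits $M$ as an \emph{indecomposable} direct summand of $\fm$.)

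To prove the displayed reduction: since $R$ is free, $\pi$ splits, so $F=F_0\oplus K$ with $K=\ker\pi$ and $\pi|_{F_0}\colon F_0\to R$ an isomorphism. Writing $\phi=\binom{\phi_0}{\phi_K}$ with respect to this decomposition, $\pi\phi$ agrees with $\phi_0\colon M\to F_0$ up to the isomorphism $F_0\cong R$, and from $\im\phi\subseteq\fm F=\fm F_0\oplus\fm K$ we get $\im\phi_0\subseteq\fm F_0$ and $\im\phi_K\subseteq\fm K$. Then $\phi_0$ is not a split epimorphism, and not a split monomorphism either: otherwise $M$ would be a direct summand of $F_0\cong R$, and since $\phi\neq0$ this forces $M\cong R$ and $\phi_0$ an isomorphism, contradicting $\im\phi_0\subseteq\fm F_0$. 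So if $\phi_0$ were not irreducible there would be a factorization $\phi_0=b_0a_0$ with $a_0\colon M\to L$ not a split monomorphism and $b_0\colon L\to F_0$ not a split epimorphism; then $\phi=(b_0\oplus\id_K)\binom{a_0}{\phi_K}$, and $b_0\oplus\id_K$ is not a split epimorphism (the $F_0\to L$ component of any of its sections would be a section of $b_0$), so irreducibility of $\phi$ makes $\binom{a_0}{\phi_K}\colon M\to L\oplus K$ a split monomorphism. Picking a retraction $(\rho_1,\rho_2)$ gives $\rho_1a_0+\rho_2\phi_K=\id_M$; but $\rho_2\phi_K$ has image inside $\rho_2(\fm K)=\fm\,\rho_2(K)\subseteq\fm M$, so $\id_M-\rho_2\phi_K$ induces the identity on $M/\fm M$, hence is surjective by Nakayama, hence an automorphism since $M$ is finitely generated. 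Thus $\rho_1a_0$ is an automorphism of $M$ and $a_0$ is a split monomorphism --- a contradiction. Hence $\pi\phi$ is irreducible.

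The main obstacle is precisely this reduction. The target $F$ is free but in general decomposable, so the theory of (minimal) right almost split maps applies to the rank-one module $R$ rather than to $F$, and one must pass from $\phi$ to a projection $\pi\phi$ onto such an $R$ while preserving irreducibility; the subtle point is that the remaining components $\phi_K$ of $\phi$ cannot interfere, and this is exactly what $\im\phi\subseteq\fm F$ delivers --- it makes $\rho_2\phi_K$ a non-unit endomorphism of $M$, so that $\id_M-\rho_2\phi_K$ stays invertible. The Henselian hypothesis places us in a Krull--Schmidt setting; when $M$ is indecomposable it also allows the ``surjective endomorphism of a finitely generated module is an isomorphism'' step to be replaced by the locality of $\End_R(M)$.
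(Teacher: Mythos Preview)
Your argument is correct, and for part~(b) it takes a genuinely different route from the paper.

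For part~(a) the two proofs are close in spirit: the paper first treats the case $I=\fm$ via a diagram chase with cokernels and then reduces the general $I$ to that case, while you handle general $I$ in one step by factoring $h$ through $\fm=I\oplus J$ and projecting back onto $I$. Both are short; yours is a bit more direct.

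For part~(b) the approaches diverge. The paper proves (b1) by pullback: for each proper $D\subseteq C=\coker\phi$ the pullback $E$ of $F\to C$ along $D\hookrightarrow C$ contains $M$ as a split submodule; taking $D=\fm C$ gives $M\mid\fm^{\oplus r}$, and taking $D$ maximal gives $M\mid R^{\oplus r-1}\oplus\fm$. These two facts are then combined by Krull--Schmidt, which is where the Henselian hypothesis enters. For (b2) the paper invokes a separate lemma on components of irreducible maps into direct sums, valid when $\End_R(M)$ is local --- again supplied by the Henselian assumption together with indecomposability of $M$.

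You instead prove directly that $\pi\phi$ is irreducible for \emph{every} surjection $\pi\colon F\onto R$, and derive both (b1) and (b2) from this. The crucial observation is that the hypothesis $\im\phi\subseteq\fm F$ forces the ``residual'' component $\phi_K$ to land in $\fm K$, so any correction term $\rho_2\phi_K$ arising from a retraction has image in $\fm M$; then $\id_M-\rho_2\phi_K$ is surjective by Nakayama and hence an automorphism of the finitely generated module $M$. This argument is more elementary and strictly stronger: it uses neither the Henselian hypothesis nor, for (b2), the indecomposability of $M$. In effect you have shown that both assumptions can be dropped from the theorem as stated. The paper's pullback method, by contrast, yields extra structural information along the way (e.g.\ $M\mid\fm^{\oplus r}$) that your approach does not isolate.
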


Section~\ref{sec05062016c} is devoted to the case where we have an irreducible epimorphism from a free module. We prove our main result in this section, stated next, in~\ref{para20170617c}.
In this theorem, $\soc R$ denotes the socle of $R$.

\begin{intthm}\label{thm16052016b}
Let $\phi\colon F\to M$ be an irreducible epimorphism of $R$-modules with $F$ free. Then the following assertions hold.
\begin{enumerate}[\rm(a)]
\item
The kernel of $\phi$ is isomorphic to $k$.
\item
Assume that $\End_R(M)$ is a local ring, and let $\iota\colon R\to F$ be a split monomorphism. Then the following hold.
\begin{enumerate}[\rm(b1)]
\item
The composition $\phi\iota\colon R\to M$ is irreducible and hence, it is either surjective or injective.
\item
Suppose that $\phi\iota$ is surjective.
Then $R$ has type one, $F$ has rank one, and there is a commutative diagram
$$
\xymatrix{
F\ar[rr]^{\phi}&&M\\
R\ar[rr]^{\pi}\ar[u]^{\iota}&&R/\soc R\ar[u]_{\rho}
}
$$
such that $\iota$ and $\rho$ are isomorphisms and $\pi$ is the natural surjection.
\end{enumerate}
\end{enumerate}
\end{intthm}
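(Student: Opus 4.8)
The plan is to establish the three parts in order; only part (b2) requires a genuinely new construction.

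\emph{Part (a).} First I would show $\ker\phi\subseteq\fm F$. If some $x\in\ker\phi$ avoided $\fm F$ it would span a free summand $F=Rx\oplus F'$, and since $\phi(x)=0$ the restriction $\phi|_{F'}$ would still be onto, so $\phi$ factors as $F\onto F'\xra{\phi|_{F'}}M$; irreducibility forces the projection $F\to F'$ to be split mono (impossible, as $x\neq0$) or $\phi|_{F'}$ to be split epi, and the latter makes $M$ a free module, so $\phi$ splits — a contradiction. Thus $\phi$ is a projective cover. If now $0\subsetneq K'\subsetneq\ker\phi$, then $\phi$ factors through the proper quotient $F/K'$; the map $F\to F/K'$ is not split mono, so $F/K'\to M$ is split epi and $F/K'\cong M\oplus(\ker\phi/K')$; but $\ker\phi/K'\subseteq\fm(F/K')$ because $\ker\phi\subseteq\fm F$, and a direct summand contained in the radical is zero by Nakayama, so $K'=\ker\phi$. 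Hence $\ker\phi$ is a simple module, i.e. $\ker\phi\cong k$.

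\emph{Part (b1).} Choose a retraction of $\iota$ and write $F=\iota(R)\oplus F'$, so that $\phi\iota$ is identified with $\phi|_{\iota(R)}$. That $\phi\iota$ is neither split mono nor split epi follows from $R$ and $M$ being indecomposable (the latter by locality of $\End_R M$) together with $M$ not being free, since otherwise $M\cong R$ or $M\cong0$, forcing $\phi$ to split. Given a factorization $\phi\iota=hg$ with $g\colon R\to L$ and $h\colon L\to M$, put $G=g\oplus\id_{F'}\colon F\to L\oplus F'$ and $H\colon L\oplus F'\to M$, $H(\ell,f')=h(\ell)+\phi(f')$; then $\phi=HG$. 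Irreducibility of $\phi$ gives: either $G$ is split mono — and its block form forces $g$ to be split mono — or $H$ is split epi, say with section $(s_1,s_2)$, whence $hs_1+(\phi|_{F'})s_2=\id_M$ in the local ring $\End_R M$; if $(\phi|_{F'})s_2$ were a unit then $M$ would be a free summand of $F'$ and $\phi$ would split, so it lies in the maximal ideal, $hs_1$ is a unit, and $h$ is split epi. Hence $\phi\iota$ is irreducible; and any irreducible map is mono or epi, since factoring it through its image and applying irreducibility turns one of the two factors into an isomorphism.

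\emph{Part (b2).} Suppose $\phi\iota$ is onto. Then $M$ is cyclic, so the projective cover $\phi$ from (a) has $F\cong R$; a split mono $R\to R$ is an isomorphism, so $\iota$ is an isomorphism, $F$ has rank one, and $M\cong R/I$ with $I:=\ker\phi\cong k$, hence $I\subseteq\soc R$. Everything reduces to proving $\soc R=I$, i.e. $R$ has type one: granting this, $\phi\iota$ has kernel $\iota^{-1}(I)=\soc R$, and the square commutes with $\rho\colon R/\soc R\to M$ the induced isomorphism and $\pi$ the natural surjection. Assume $\soc R\supsetneq I$, and fix a generator $\omega$ of $I$ and an element $\omega'\in\soc R$ with $\omega,\omega'$ linearly independent over $k$. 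Set $E:=(R\oplus R)/R{\cdot}(\omega,-\omega')$. Then $\gamma\colon E\to M$, $\overline{(a,b)}\mapsto a+I$, is onto with kernel generated by $\overline{(0,1)}$, giving a short exact sequence $0\to R\xra{\beta}E\xra{\gamma}M\to0$ (the nonzero class of $\ext^1_R(M,R)\cong\soc R/I$). Since $R$ is projective, lift $\phi$ to $\widetilde\phi\colon R\to E$, $r\mapsto\overline{(r,0)}$, so $\gamma\widetilde\phi=\phi$. Now $E/\widetilde\phi(R)\cong R/(\omega')$, and any $R$-linear section of $E\onto R/(\omega')$ would, by checking well-definedness, require a scalar $a\in k$ with $a\omega'=-\omega$, contradicting linear independence; hence $\widetilde\phi(R)$ is not a direct summand and $\widetilde\phi$ is not split mono. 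The same computation shows $\gamma$ is not split epi. Thus $\phi=\gamma\widetilde\phi$ contradicts the irreducibility of $\phi$, so $\soc R=I$, $R$ has type one, and $M\cong R/\soc R$, completing the diagram.

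I expect the construction of $E$ in part (b2) to be the main obstacle: every "obvious" factorization of $\phi$ — through free modules, through $k$, or through cyclic quotients $R/I'$ — is automatically split on one side, so one is forced to route $\phi$ through a genuinely non-split self-extension of $M$ by $R$, which exists precisely because $\soc R\neq I$, and then to verify that the canonical lift $\widetilde\phi$ along it fails to be a split monomorphism.
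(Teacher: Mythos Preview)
Your proof is correct. Parts (b1) and (b2) follow the paper closely: for (b1) you reprove the relevant case of Lemma~\ref{prop16052016a} directly, and for (b2) your $E=(R\oplus R)/R(\omega,-\omega')$ is precisely the pushout the paper constructs along $g\colon k\to R$, $\bar 1\mapsto\omega'$ (your $\widetilde\phi$ and $\gamma$ are the paper's $m$ and $e$), with only cosmetic differences --- you argue by contradiction where the paper argues directly for each $y\in\soc R$, and you obtain $\rank F=1$ at the outset from the projective-cover property rather than at the end via the Five Lemma. Part (a) is where you genuinely diverge: the paper chooses a nonsplit extension $0\to k\to L\to M\to 0$, lifts $\phi$ through it, and uses irreducibility to force the lift to be a split monomorphism, which embeds $\ker\phi$ into $k$; you instead show $\ker\phi\subseteq\fm F$ directly and then rule out any intermediate $0\subsetneq K'\subsetneq\ker\phi$ by factoring through $F/K'$ and applying Nakayama to the resulting direct summand. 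Both routes are short; yours has the bonus of establishing that $\phi$ is a projective cover, which you then recycle in (b2) to read off $\rank F=1$ immediately.
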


Our motivation for the main result in Section~\ref{sec05062016d} comes from the Auslander-Reiten Conjecture~\cite{AR} that originates in representation theory of artin algebras.
This section deals with this conjecture and also
with a Tor version of it when the module admits irreducible homomorphisms
described in Theorems~\ref{thm16052016a} and~\ref{thm16052016b}; see Theorem~\ref{cor230915a}.

\section{Basic properties}\label{sec05062016b}

This section contains some results that will be used in the subsequent sections. The next result is a part of~\cite[Lemma 5.1]{ARS} in which $R$ is assumed to be an artin algebra. Here we give the proof (with no such assumption on $R$) for the reader's convenience.

\begin{prop}\label{si}
Let $f\colon M\to N$ be an irreducible homomorphism of $R$-modules.
Then $f$ is either surjective or injective.
\end{prop}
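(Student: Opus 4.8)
The plan is a short argument by contradiction that applies the definition of irreducibility to the canonical factorization of $f$ through its image. Suppose, contrary to the claim, that $f$ is neither injective nor surjective. Put $L=\im(f)$, a proper submodule of $N$, and write $f=hg$, where $g\colon M\to L$ is the surjection obtained by corestricting $f$ and $h\colon L\into N$ is the inclusion. Both $g$ and $h$ are $R$-module homomorphisms, so this is a legitimate factorization of $f$; hence irreducibility of $f$ forces $g$ to be a split monomorphism or $h$ to be a split epimorphism.

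I would then dispose of the two cases in turn. If $g$ is a split monomorphism, then $g$ admits a left inverse and is in particular injective; since $g$ is surjective by construction, it is an isomorphism, whence $f=hg$ is injective, contradicting our assumption. If instead $h$ is a split epimorphism, then $h$ admits a right inverse and is in particular surjective; since $h$ is injective, being an inclusion, it is an isomorphism, so $N=\im(h)=\im(f)$ and $f$ is surjective, again a contradiction.

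Since neither alternative can occur, $f$ must be injective or surjective, as desired. I do not anticipate any real obstacle here; the only point worth isolating is the elementary observation that a split monomorphism that is also surjective (respectively, a split epimorphism that is also injective) is automatically an isomorphism, which is immediate from the existence of the relevant one-sided inverse.
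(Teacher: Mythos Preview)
Your proof is correct and follows essentially the same approach as the paper: factor $f$ through its image and use irreducibility to force either the surjection $g$ or the inclusion $h$ to be an isomorphism. The only cosmetic difference is that you phrase it as a proof by contradiction, whereas the paper argues directly.
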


\begin{proof}
The map $f$ has a factorization $M\xra{g}\im f\xra{h}N$, where $g$ is the surjection induced by $f$ and $h$ is the inclusion map.
Since $f$ is irreducible, either $g$ is a split monomorphism or $h$ is a split epimorphism.
In the first case, $g$ is an isomorphism, which means that $f$ is injective.
In the second case, $h$ is an isomorphism, which means that $f$ is surjective.
\end{proof}

\begin{lem}\label{p}
Let $f\colon M\to N$ be a homomorphism of $R$-modules.
Then the following are equivalent:
\begin{enumerate}[\rm(i)]
\item
$f$ is irreducible.
\item
$\left(\begin{smallmatrix}
f&0\\
0&1
\end{smallmatrix}\right)\colon M\oplus X\to N\oplus X$ is irreducible for all $R$-modules $X$.
\item
$\left(\begin{smallmatrix}
f&0\\
0&1
\end{smallmatrix}\right)\colon M\oplus X\to N\oplus X$ is irreducible for some $R$-module $X$.
\end{enumerate}
\end{lem}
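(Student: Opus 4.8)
The plan is to prove the implications (i)$\Rightarrow$(ii)$\Rightarrow$(iii)$\Rightarrow$(i), with the bulk of the work in (i)$\Rightarrow$(ii); the step (ii)$\Rightarrow$(iii) is trivial (take any $X$, e.g.\ $X=0$), and (iii)$\Rightarrow$(i) will follow by the same kind of argument as (i)$\Rightarrow$(ii) run in reverse. Throughout I abbreviate $\widetilde f=\left(\begin{smallmatrix}f&0\\0&1\end{smallmatrix}\right)\colon M\oplus X\to N\oplus X$.

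\emph{First I would handle the split conditions.} If $\widetilde f$ were a split monomorphism, then composing with the projection $N\oplus X\to N$ and restricting along the inclusion $M\hookrightarrow M\oplus X$ shows $f$ is a split monomorphism; similarly a splitting of $\widetilde f$ as an epimorphism restricts to a splitting of $f$. Hence $\widetilde f$ split $\Rightarrow$ $f$ split, and conversely if $f=sf$-type splitting exists one extends it by the identity on $X$. So for (i)$\Rightarrow$(ii) we may assume $f$ is neither a split mono nor a split epi and must show the same for $\widetilde f$, together with the factorization property.

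\emph{The main step is the factorization property.} Suppose $\widetilde f$ factors as $M\oplus X\xrightarrow{\,a\,}L\xrightarrow{\,b\,}N\oplus X$. I want to produce a factorization of $f$ through some module and invoke irreducibility of $f$. The natural move is to enlarge the factorization: consider
$$
M\oplus X\xrightarrow{\ \left(\begin{smallmatrix}a\\ (0\ 1)\end{smallmatrix}\right)\ }L\oplus X\xrightarrow{\ \left(\begin{smallmatrix}b&\big(\begin{smallmatrix}0\\ -1\end{smallmatrix}\big)\\ 0&1\end{smallmatrix}\right)\ }N\oplus X,
$$
whose composite is again $\widetilde f$, and whose first and second maps differ from $a$ and $b$ only by split mono/epi factors (an automorphism of $L\oplus X$ or an elementary transformation), so replacing $(a,b)$ by this pair changes neither the conclusion nor the hypothesis. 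After such a reduction one arranges that $X$ splits off compatibly, i.e.\ $L\cong L'\oplus X$ with $a=\left(\begin{smallmatrix}a'\\ 0\end{smallmatrix}\right)\oplus\mathrm{id}_X$ and $b=\left(\begin{smallmatrix}b'&0\\0&1\end{smallmatrix}\right)$ up to the above harmless modifications; then $f=b'a'$ is a factorization of $f$ through $L'$. Since $f$ is irreducible, $a'$ is a split mono or $b'$ is a split epi, and adding back $\mathrm{id}_X$ shows the corresponding map in the original factorization of $\widetilde f$ is a split mono or split epi. This is exactly the content of \cite[Lemma 5.1]{ARS}, and the argument there does not use the artin algebra hypothesis; I would reproduce it in this generality.

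\emph{The hard part} is the bookkeeping in the reduction of the factorization of $\widetilde f$ to one of the form $L'\oplus X$: one must be careful that the elementary row/column operations used to split $X$ off of $L$ are realized by genuine split monomorphisms and split epimorphisms (not just abstract isomorphisms of the relevant maps), so that irreducibility of $f$ may legitimately be applied and the conclusion transported back. Once that is set up, (iii)$\Rightarrow$(i) is the same argument with the roles of $f$ and $\widetilde f$ interchanged: a factorization $M\xrightarrow{g}L\xrightarrow{h}N$ of $f$ yields the factorization $M\oplus X\xrightarrow{g\oplus 1}L\oplus X\xrightarrow{h\oplus 1}N\oplus X$ of $\widetilde f$, whose irreducibility forces $g\oplus 1$ split mono or $h\oplus 1$ split epi, whence (by the first step) $g$ split mono or $h$ split epi; and $\widetilde f$ not split forces $f$ not split, again by the first step. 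This closes the cycle of implications.
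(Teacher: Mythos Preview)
Your handling of the split conditions and of (iii)$\Rightarrow$(i) matches the paper. The difference is in (i)$\Rightarrow$(ii), where you make life harder than necessary.

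You propose to massage the intermediate module $L$ (or an enlargement $L\oplus X$) until $X$ visibly splits off, then extract a factorization of $f$ through some $L'$. That can be made to work, because the relations $\delta\beta=1$, $\delta\alpha=0$, $\gamma\beta=0$ coming from $\widetilde f=\left(\begin{smallmatrix}\gamma\\\delta\end{smallmatrix}\right)(\alpha\ \beta)$ already force $L\cong\ker\delta\oplus\im\beta$ compatibly. But the paper sidesteps all of this: writing the two legs of the factorization as $(\alpha\ \beta)\colon M\oplus X\to L$ and $\left(\begin{smallmatrix}\gamma\\\delta\end{smallmatrix}\right)\colon L\to N\oplus X$, the $(1,1)$-entry of the product gives $\gamma\alpha=f$ immediately, so $M\xrightarrow{\alpha}L\xrightarrow{\gamma}N$ is already a factorization of $f$ through $L$ itself. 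Irreducibility of $f$ then says $\alpha$ is split mono or $\gamma$ is split epi, and two short explicit matrix identities (using $\delta\beta=1$ and a one-sided inverse of $\alpha$ or $\gamma$) upgrade this to $(\alpha\ \beta)$ split mono or $\left(\begin{smallmatrix}\gamma\\\delta\end{smallmatrix}\right)$ split epi.

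So the ``hard part'' you flag---the bookkeeping to split $X$ off of $L$---simply does not arise in the paper's argument. Your route is not wrong, but it is a detour; the direct observation $\gamma\alpha=f$ is the cleaner key step.
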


\begin{proof}
It is straightforward to see that $f$ is neither a split monomorphism nor a split epimorphism if and only if
$\left(\begin{smallmatrix}
f&0\\
0&1
\end{smallmatrix}\right)$ is neither a split monomorphism nor a split epimorphism.

For an arbitrary $R$-module $X$ consider a factorization
$$
M\oplus X\xra{\left(\begin{smallmatrix}\alpha&\beta\end{smallmatrix}\right)}L\xra{\left(\begin{smallmatrix}\gamma\\ \delta\end{smallmatrix}\right)}N\oplus X
$$
of $\left(\begin{smallmatrix}
f&0\\
0&1
\end{smallmatrix}\right)$.
Then $\gamma\alpha=f$, $\gamma\beta=0$, $\delta\alpha=0$, and $\delta\beta=1$.
If $f$ is irreducible, then either $\alpha$ is a split monomorphism or $\gamma$ is a split epimorphism.

If $\alpha$ is a split monomorphism, then there is a homomorphism $\varepsilon\colon L\to M$ such that $\varepsilon\alpha=1$ and we have
$$
\begin{pmatrix}
1&-\varepsilon\beta\\
0&1
\end{pmatrix}\left(\begin{matrix}\varepsilon\\ \delta\end{matrix}\right)\left(\begin{matrix}\alpha&\beta\end{matrix}\right)=\begin{pmatrix}
1&-\varepsilon\beta\\
0&1
\end{pmatrix}\begin{pmatrix}
1&\varepsilon\beta\\
0&1
\end{pmatrix}=\begin{pmatrix}
1&0\\
0&1
\end{pmatrix}.
$$
Therefore, $\left(\begin{smallmatrix}\alpha&\beta\end{smallmatrix}\right)$ is a split monomorphism.

If $\gamma$ is a split epimorphism, then there is a homomorphism $\zeta\colon N\to L$ such that $\gamma\zeta=1$ and we have
$$
\left(\begin{matrix}\gamma\\ \delta\end{matrix}\right)\left(\begin{matrix}\varepsilon & \beta\end{matrix}\right)
\begin{pmatrix}
1&0\\
-\delta\varepsilon&1
\end{pmatrix}=\begin{pmatrix}
1&0\\
\delta\varepsilon&1
\end{pmatrix}\begin{pmatrix}
1&0\\
-\delta\varepsilon&1
\end{pmatrix}=\begin{pmatrix}
1&0\\
0&1
\end{pmatrix}.
$$
Therefore, $\left(\begin{smallmatrix}\gamma\\ \delta\end{smallmatrix}\right)$ is a split epimorphism.
Thus, $\left(\begin{smallmatrix}
f&0\\
0&1
\end{smallmatrix}\right)$ is irreducible. This proves (i)$\implies$(ii).

To show (iii)$\implies$(i), let $M\xra{a}L\xra{b}N$ be a factorization of $f$, and let $X$ be an $R$-module such that $\left(\begin{smallmatrix}
f&0\\
0&1
\end{smallmatrix}\right)\colon M\oplus X\to N\oplus X$ is irreducible.
We then have
$$
\begin{pmatrix}
f&0\\
0&1
\end{pmatrix}=\begin{pmatrix}
ba&0\\
0&1
\end{pmatrix}=\begin{pmatrix}
b&0\\
0&1
\end{pmatrix}\begin{pmatrix}
a&0\\
0&1
\end{pmatrix}.
$$
Then either $\left(\begin{smallmatrix}
b&0\\
0&1
\end{smallmatrix}\right)$ is a split epimorphism or $\left(\begin{smallmatrix}
a&0\\
0&1
\end{smallmatrix}\right)$ is a split monomorphism.
It is straightforward to see that either $b$ is a split epimorphism or $a$ is a split monomorphism.
Therefore, $f$ is irreducible.
\end{proof}

Let $n\geq 1$ be an integer. In the next lemma, $\diag(a_1,a_2,\dots,a_n)$ denotes the square matrix with $a_1, a_2,\dots,a_n$ on the main diagonal and zero everywhere else.

\begin{lem}\label{prop16052016a}
(a) For $1\le i\le n$ let $f_i\colon M_i\to N$ be a homomorphism of $R$-modules, and
assume that $\End_R(N)$ is a local ring.
If $$\left(\begin{matrix}f_1& f_2& \dots&f_n\end{matrix}\right)\colon M_1\oplus\cdots\oplus M_n\to N$$ is irreducible, then for all $1\le i\le n$ the $R$-homomorphism $f_i$ is irreducible.

(b) For $1\le i\le n$ let $h_i\colon M\to N_i$ be a homomorphism of $R$-modules, and
assume that $\End_R(M)$ is a local ring.
If $$\left(\begin{matrix}h_1&h_2&\dots&h_n\end{matrix}\right)^{tr}\colon M\to N_1\oplus\cdots\oplus N_n$$ is irreducible, then for all $1\le i\le n$ the $R$-homomorphism $h_i$ is irreducible.
\end{lem}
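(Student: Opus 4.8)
The plan is to prove (a) and obtain (b) by the formally dual argument. Fix $i$ and assume each $M_j$ is nonzero (this is needed for the conclusion, since a zero map is a split monomorphism). Put $M' = \bigoplus_{j\ne i} M_j$ and $f' = (f_j)_{j\ne i}\colon M'\to N$; composing with the reordering isomorphism $\bigoplus_j M_j\xrightarrow{\ \sim\ }M_i\oplus M'$, which preserves irreducibility, I identify $f := (f_1,\dots,f_n)$ with $(f_i,\,f')\colon M_i\oplus M'\to N$.

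First I would record that $f_i$ is neither a split epimorphism nor a split monomorphism. If $f_i\sigma = 1_N$ for some $\sigma\colon N\to M_i$, then composing $f$ with $\left(\begin{smallmatrix}\sigma\\0\end{smallmatrix}\right)\colon N\to M_i\oplus M'$ gives $1_N$, so $f$ is a split epimorphism, contradicting its irreducibility. If $\rho f_i = 1_{M_i}$ for some $\rho\colon N\to M_i$, then $e := f_i\rho$ is an idempotent of $\End_R(N)$ with $\im e = \im f_i\cong M_i$, so $N\cong M_i\oplus\ker e$; as $\End_R(N)$ is local it has no idempotents besides $0$ and $1$, so $\ker e = 0$ (using $M_i\ne 0$), i.e. $f_i$ is an isomorphism --- but then $f$ is a split epimorphism, again a contradiction.

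The heart of the matter is the factorization axiom. Given any factorization $M_i\xrightarrow{\,a\,}L\xrightarrow{\,b\,}N$ of $f_i$, the diagram
$$
M_i\oplus M'\xrightarrow{\ \left(\begin{smallmatrix}a&0\\0&1\end{smallmatrix}\right)\ }L\oplus M'\xrightarrow{\ (b,\,f')\ }N
$$
is a factorization of $f = (f_i,\,f')$. By irreducibility of $f$, either $\left(\begin{smallmatrix}a&0\\0&1\end{smallmatrix}\right)$ is a split monomorphism or $(b,\,f')$ is a split epimorphism. In the first case the top-left block of a left inverse of $\left(\begin{smallmatrix}a&0\\0&1\end{smallmatrix}\right)$ is a left inverse of $a$, so $a$ is a split monomorphism (alternatively invoke Lemma~\ref{p}), and we are done. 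In the second case there are $t_1\colon N\to L$ and $t_2\colon N\to M'$ with $bt_1 + f't_2 = 1_N$ in $\End_R(N)$. This is the one place the hypothesis is used decisively: in the local ring $\End_R(N)$, a sum equal to $1$ has a summand that is a unit. If $bt_1$ is a unit, then $b$ is a split epimorphism, completing the verification of the factorization axiom for $f_i$; if $f't_2$ is a unit, then $f'\colon M'\to N$ is a split epimorphism, hence so is $f = (f_i,\,f')$, contradicting irreducibility. Thus $f_i$ is irreducible, proving (a).

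For (b) I would run the mirror argument with arrows reversed: writing $h := (h_1,\dots,h_n)^{tr}$, $N' = \bigoplus_{j\ne i}N_j$ and $h' = (h_j)_{j\ne i}^{tr}\colon M\to N'$, a factorization $M\xrightarrow{\,a\,}L\xrightarrow{\,b\,}N_i$ of $h_i$ gives the factorization $M\xrightarrow{(a,\,h')^{tr}}L\oplus N'\xrightarrow{\left(\begin{smallmatrix}b&0\\0&1\end{smallmatrix}\right)}N_i\oplus N'$ of $h$; irreducibility of $h$ now yields that $b$ is a split epimorphism, or that $(a,\,h')^{tr}$ is a split monomorphism, in which case a relation $1_M = s_1 a + s_2 h'$ in the local ring $\End_R(M)$ has a unit summand, forcing either $a$ to be a split monomorphism or $h'$ to be a split monomorphism --- the latter making $h$ a split monomorphism, which is impossible. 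The arguments that $h_i$ is neither split monic nor split epic are dual to those in the second paragraph, using that $\End_R(M)$ is local. I expect the only genuine difficulty to be organizational: keeping the $2\times 2$ block bookkeeping aligned with the direction of the maps and applying the locality hypothesis to the correct endomorphism ring at the correct moment.
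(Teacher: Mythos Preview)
Your proof is correct and essentially the same as the paper's: both rule out $f_i$ being split epi or split mono directly, then factor $(f_1,\dots,f_n)$ through a block-diagonal map and use locality of $\End_R(N)$ on the resulting unit equation to force either $a$ split mono or $b$ split epi. The only differences are cosmetic---you bundle $(f_j)_{j\ne i}$ into a single $f'$ while the paper keeps them separate and argues each $f_jg_j$ is a non-unit individually, and you handle the ``not split mono'' step via idempotents where the paper invokes indecomposability of $N$; your remark that $M_i\ne 0$ must be assumed for the conclusion is a valid point the paper leaves implicit.
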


\begin{proof}
We only prove the first assertion; the second one is shown dually.
Also, we only prove that $f_1$ is irreducible, as the irreducibility of the other $f_i$ follow similarly.

First, suppose that $f_1$ is a split epimorphism. Then there is $g\colon N\to M_1$ such
that $f_1g=1$, and we have
$\left(\begin{matrix}f_1& f_2& \dots&f_n\end{matrix}\right)\left(\begin{matrix}g& 0& \dots&0\end{matrix}\right)^{tr}=1$.
This implies $\left(\begin{matrix}f_1& f_2& \dots&f_n\end{matrix}\right)$ is a split epimorphism, which contradicts
the assumption that it is irreducible. Hence, $f_1$ is not a split
epimorphism.

Next, suppose that $f_1$ is a split monomorphism. Since
$\End(N)$ is local, $N$ is indecomposable.\footnote{Note that we do not need Henselian property here.} Hence, $f_1\colon M_1\to N$ becomes an isomorphism,
and we have $\left(\begin{matrix}f_1& f_2& \dots&f_n\end{matrix}\right)\left(\begin{matrix}f_1^{-1}& 0& \dots&0\end{matrix}\right)^{tr}=1$.
This implies $\left(\begin{matrix}f_1& f_2& \dots&f_n\end{matrix}\right)$ is a split epimorphism, which contradicts
the assumption that it is irreducible. Hence, $f_1$ is not a split
monomorphism.

Now let $M_1\xrightarrow{\alpha}X\xrightarrow{\beta}N$ be a factorization of $f_1$.
We then have
$$
\left(\begin{matrix}f_1& f_2& \dots&f_n\end{matrix}\right)=\left(\begin{matrix}\beta\alpha& f_2& \dots&f_n\end{matrix}\right)=
\left(\begin{matrix}\beta& f_2& \dots&f_n\end{matrix}\right)\cdot\diag(\alpha,1,\dots,1).
$$
By assumption, either $\left(\begin{matrix}\beta& f_2& \dots&f_n\end{matrix}\right)$ is a split epimorphism or $\diag(\alpha,1,\dots,1)$ is a split monomorphism.
In the latter case, we easily see that $\alpha$ is a split monomorphism.
In the former case, we find homomorphisms $c\in\Hom_R(N,X)$ and $g_i\in\Hom_R(N,M_i)$ for $2\le i\le n$ such that the equality
\begin{equation}\label{eq16052016a}
\beta c+f_2g_2+\dots+f_ng_n=1
\end{equation}
holds in $\End_R(N)$.
Assume that $f_2g_2$ is a unit of $\End_R(N)$.
Then $f_2$ is a split epimorphism, so there is a map $d\colon N\to M_2$ such that $f_2d=1$.
Hence, we have $\left(\begin{matrix}f_1& f_2& \dots&f_n\end{matrix}\right)\left(\begin{matrix}0&d&0&\dots&0\end{matrix}\right)^{tr}=1$, which says that $\left(\begin{matrix}f_1& f_2& \dots&f_n\end{matrix}\right)$ is a split epimorphism, contrary to the assumption that it is irreducible.
Therefore, $f_2g_2$ is not a unit of $\End_R(N)$. Similarly, we can show that $f_3g_3,\dots,f_ng_n$ are not units of $\End_R(N)$.
Since $\End_R(N)$ is a local ring, from equation~\eqref{eq16052016a} we conclude that $\beta c$ is a unit. Hence, $\beta$ is a split epimorphism.
Thus $f_1$ is irreducible.
\end{proof}

For an $R$-module $M$ and for a positive integer $n$, by $M^{\oplus n}$ we denote the direct sum $\bigoplus_{i=1}^nM$.

\begin{lem}\label{prop16052016b}
Let $f\colon M\to N$ be a homomorphism of $R$-modules, and let $n$ be a positive integer.
If $f^{\oplus n}\colon M^{\oplus n}\to N^{\oplus n}$ is irreducible, then $n=1$.
\end{lem}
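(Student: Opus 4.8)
The statement asserts that an irreducible homomorphism $f\colon M\to N$ can never give rise to an irreducible $f^{\oplus n}$ unless $n=1$. The plan is to argue by contradiction: assume $n\geq 2$ and $f^{\oplus n}$ is irreducible, and produce a factorization of $f^{\oplus n}$ through some intermediate module in which neither leg is a split mono nor a split epi. The natural choice is to peel off one copy of $M$ and one copy of $N$ and route that single copy through a copy of $f$, writing
$$
f^{\oplus n}=\bigl(f\oplus \mathrm{id}_{N^{\oplus(n-1)}}\bigr)\circ\bigl(\mathrm{id}_{M}\oplus f^{\oplus(n-1)}\bigr)\colon M^{\oplus n}\to M\oplus N^{\oplus(n-1)}\to N^{\oplus n}.
$$
This is a genuine factorization since $f^{\oplus n}=\diag(f,\dots,f)=\diag(f,1,\dots,1)\circ\diag(1,f,\dots,f)$. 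By irreducibility of $f^{\oplus n}$, either the first factor $\mathrm{id}_M\oplus f^{\oplus(n-1)}$ is a split monomorphism, or the second factor $f\oplus\mathrm{id}_{N^{\oplus(n-1)}}$ is a split epimorphism.

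In the first case, $\mathrm{id}_M\oplus f^{\oplus(n-1)}$ split mono forces $f^{\oplus(n-1)}$ to be a split monomorphism (a direct summand of a split mono, restricted to a summand, is again a split mono — projecting the retraction appropriately), hence each copy of $f$, i.e. $f$ itself, is a split monomorphism. This contradicts that $f$ is irreducible, since an irreducible map is by definition not a split monomorphism. Symmetrically, in the second case $f\oplus\mathrm{id}_{N^{\oplus(n-1)}}$ split epi forces $f$ to be a split epimorphism, again contradicting irreducibility of $f$. So in either case we reach a contradiction as soon as $n-1\geq 1$, i.e. $n\geq 2$; the factorization is only nontrivial when there is at least one copy left over to route through $f$. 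Hence $n=1$.

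The main thing to be careful about is the purely formal but slightly fiddly claim that a block-diagonal map $g\oplus h$ is a split mono (resp. split epi) precisely when both $g$ and $h$ are — or at least that it being a split mono implies each block is. One clean way to see this is: if $g\oplus h\colon A\oplus A'\to B\oplus B'$ has a left inverse $\bigl(\begin{smallmatrix}s_{11}&s_{12}\\ s_{21}&s_{22}\end{smallmatrix}\bigr)$, then comparing $(1,1)$-entries of $\mathrm{id}=\bigl(\begin{smallmatrix}s_{11}&s_{12}\\ s_{21}&s_{22}\end{smallmatrix}\bigr)\bigl(\begin{smallmatrix}g&0\\ 0&h\end{smallmatrix}\bigr)$ gives $s_{11}g=1_A$, so $g$ is a split mono; likewise $s_{22}h=1_{A'}$. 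The dual computation handles split epimorphisms. Alternatively, one can invoke Lemma~\ref{p} repeatedly — since $f^{\oplus n}=\bigl(\begin{smallmatrix}f&0\\0&1\end{smallmatrix}\bigr)$ with $X=M^{\oplus(n-1)}$ on the source block being matched against $N^{\oplus(n-1)}$ — but the blocks don't line up as neatly as in Lemma~\ref{p} (there the ``$1$'' block has the same module on both sides), so a direct matrix computation is cleaner here. I expect this bookkeeping to be the only obstacle; the conceptual content is just "irreducible maps are not split, and splitness of a direct sum passes to summands."

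<br>

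\begin{proof}
Suppose, for contradiction, that $n\geq 2$ and $f^{\oplus n}$ is irreducible. Write
$$
f^{\oplus n}=\diag(f,\dots,f)=\diag(f,1,\dots,1)\circ\diag(1,f,\dots,f),
$$
which exhibits a factorization
$$
M^{\oplus n}\xra{\ \diag(1,f,\dots,f)\ }M\oplus N^{\oplus(n-1)}\xra{\ \diag(f,1,\dots,1)\ }N^{\oplus n}.
$$
Since $f^{\oplus n}$ is irreducible, either $\diag(1,f,\dots,f)\colon M\oplus M^{\oplus(n-1)}\to M\oplus N^{\oplus(n-1)}$ is a split monomorphism, or $\diag(f,1,\dots,1)\colon M\oplus N^{\oplus(n-1)}\to N\oplus N^{\oplus(n-1)}$ is a split epimorphism.

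In the first case, let $s=\left(\begin{smallmatrix}s_{11}&s_{12}\\ s_{21}&s_{22}\end{smallmatrix}\right)\colon M\oplus N^{\oplus(n-1)}\to M\oplus M^{\oplus(n-1)}$ satisfy $s\cdot\diag(1,f^{\oplus(n-1)})=1$. Comparing the bottom-right blocks gives $s_{22}f^{\oplus(n-1)}=1_{M^{\oplus(n-1)}}$, so $f^{\oplus(n-1)}$ is a split monomorphism. Composing its left inverse with the inclusion of, and projection onto, the first coordinate shows that $f\colon M\to N$ is a split monomorphism, contradicting the irreducibility of $f$.

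In the second case, let $r=\left(\begin{smallmatrix}r_{11}&r_{12}\\ r_{21}&r_{22}\end{smallmatrix}\right)\colon N\oplus N^{\oplus(n-1)}\to M\oplus N^{\oplus(n-1)}$ satisfy $\diag(f,1^{\oplus(n-1)})\cdot r=1$. Comparing the top-left blocks gives $f r_{11}=1_N$, so $f$ is a split epimorphism, again contradicting the irreducibility of $f$.

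Both cases are impossible, so $n=1$.
\end{proof}
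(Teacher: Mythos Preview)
Your argument is essentially the paper's: factor $f^{\oplus n}$ as $\diag(f,1,\dots,1)\circ\diag(1,f,\dots,f)$ through $M\oplus N^{\oplus(n-1)}$ (the paper uses the symmetric factorization through $N\oplus M^{\oplus(n-1)}$), and conclude from irreducibility that $f$ must be a split monomorphism or a split epimorphism.

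There is one slip in the last step. You twice write ``contradicting the irreducibility of $f$,'' but the hypothesis does not say $f$ is irreducible---only $f^{\oplus n}$ is. (Your proof plan also opens by paraphrasing the lemma as a statement about ``an irreducible homomorphism $f$,'' which is a misreading.) The repair is immediate and is exactly what the paper does: once you know $f$ is a split monomorphism (respectively, split epimorphism), so is $f^{\oplus n}$, and \emph{that} contradicts the assumed irreducibility of $f^{\oplus n}$.
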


\begin{proof}
Suppose $n\ge2$.
Then $f^{\oplus n}\colon M^{\oplus n}\to N^{\oplus n}$ has a factorization
$$
M^{\oplus n}\xra{\diag(f,1,\dots,1)}N\oplus M^{\oplus n-1}\xra{\diag(1,f,\dots,f)}N^{\oplus n}.
$$
Since $f^{\oplus n}$ is irreducible, we conclude that either $\diag(f,1,\dots,1)$ is a split monomorphism or $\diag(1,f,\dots,f)$ is a split epimorphism.
If $\diag(f,1,\dots,1)$ is a split monomorphism (resp. $\diag(1,f,\dots,f)$ is a split epimorphism), then so is $f$, and so is $f^{\oplus n}$, contrary to its irreducibility.
Thus, we must have $n=1$.
\end{proof}

\section{Irreducible monomorphisms to free modules}\label{sec05062016a}

In this section we provide the proof of Theorem~\ref{thm16052016a}.

\begin{para}[Proof of Theorem~\ref{thm16052016a}, Part (a)]\label{para20170617a}
Let $\theta\colon I\to R$ be the inclusion map. Note that $\theta$ is neither a split monomorphism nor a split epimorphism.

Case 1: $I=\fm$. Suppose that there is a factorization $\m\xra{\alpha}M\xra{\beta}R$ of $\theta$.
Since $\theta$ is injective, so is $\alpha$ and
we have a commutative diagram
$$
\xymatrix{
0\ar[r]&\fm\ar[r]^{\alpha}\ar@{=}[d]&M\ar[r]\ar[d]^{\beta}&N\ar[r]\ar[d]^{\gamma}&0\\
0\ar[r]&\fm\ar[r]^{\theta}&R\ar[r]^{\pi}&k\ar[r]&0
}
$$
with exact rows.

If $\gamma\ne0$, then $\gamma$ is a surjection, so $\beta$ is also a surjection. Hence, $\beta$ is a split epimorphism.

If $\gamma=0$.
Then $\pi\beta=0$, and there exists a homomorphism $\delta:M\to\m$ such that $\beta=\theta\delta$.
Since $\theta$ is injective, we have $\delta\alpha=1$, whence $\alpha$ is a split monomorphism.
Consequently, $\theta$ is an irreducible homomorphism.

Case 2: General case. We can take a proper ideal $J$ of $R$ such that $\m\cong I\oplus J$.
Let $I\xrightarrow{\alpha}M\xrightarrow{\beta}R$ be a factorization of $\theta$, and
denote by $\theta'$ the inclusion map $J\to R$.
Then we have a factorization
$$
I\oplus J\xra{\left(\begin{matrix}
\alpha&0\\
0&1
\end{matrix}\right)}M\oplus J\xra{\left(\begin{matrix}\beta&\theta'\end{matrix}\right)}R
$$
of the map $\left(\begin{matrix}\theta&\theta'\end{matrix}\right)\colon I\oplus J\to R$, which is exactly the inclusion map $\m\to R$.
It follows from Case 1 that either
$\left(\begin{smallmatrix}
\alpha&0\\
0&1
\end{smallmatrix}\right)$ is a split monomorphism or
$\left(\begin{matrix}\beta&\theta'\end{matrix}\right)$ is a split epimorphism.

If $\left(\begin{smallmatrix}
\alpha&0\\
0&1
\end{smallmatrix}\right)$ is a split monomorphism, then we easily see that $\alpha$ is a split monomorphism.
If $\left(\begin{matrix}\beta&\theta'\end{matrix}\right)$ is a split epimorphism, then we can find elements $x\in M$ and $y\in J$ such that $\beta(x)+\theta'(y)=1$.
As $\theta'(y)=y$ is an element of the maximal ideal $\m$, the element $\beta(x)$ is a unit of $R$. Now define $\beta'\colon R\to M$ by $\beta'(r)=r(\beta(x))^{-1}x$ for every $r\in R$. It follows then that $\beta\beta'=1$.
Hence, $\beta$ is a split epimorphism and therefore, $\theta$ is irreducible, as desired. \qed
\end{para}

Recall that two homomorphisms $h\colon X\to Y$ and $h'\colon X'\to Y'$ of $R$-modules are
called {\em equivalent} if there exist isomorphisms $p\colon X\to X'$ and
$q\colon Y\to Y'$ such that $qh=h'p$. (It is easy to see that irreducibility
is preserved by equivalence.)

The next lemma enables us to replace an arbitrary monomorphism $M\to F$ of $R$-modules, where $F$ is free, with one whose image is contained in $\m F$.

\begin{lem}\label{n}
Let $\phi\colon M\to R^{\oplus m}$ be a monomorphism, where $M$ is an $R$-module and $m\geq 0$ is an integer.
Then there exist an integer $n$ with $0\leq n\leq m$, an $R$-module $N$, and a monomorphism $g\colon N\to R^{\oplus n}$ such that $\im(g)\subseteq \m R^{\oplus n}$ and such that $\phi$ is equivalent to the map
$$
\left(\begin{matrix}g&0\\0&1\end{matrix}\right)\colon N\oplus R^{\oplus m-n}\to R^{\oplus n}\oplus R^{\oplus m-n}
$$
where $1$ denotes the identity map of $R^{\oplus m-n}$. In this situation, $\phi$ is irreducible if and only if so is $\left(\begin{smallmatrix}
g&0\\
0&1
\end{smallmatrix}\right)$ if and only if so is $g$.
\end{lem}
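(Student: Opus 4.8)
The plan is to induct on $m$. When $m=0$ the map $\phi$ is forced to be the zero map out of $M=0$, and $\im(\phi)\subseteq\m R^{\oplus 0}=0$ holds trivially, so one takes $n=0$, $N=0$, $g=\phi$. For the inductive step I would first dispose of the case $\im(\phi)\subseteq\m R^{\oplus m}$: here $n=m$, $N=M$, $g=\phi$ work and the asserted equivalence is literally an equality, since the summand $R^{\oplus m-n}$ vanishes.

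So suppose $\im(\phi)\not\subseteq\m R^{\oplus m}$; then $m\geq 1$ and some coordinate $\psi:=\pi_i\circ\phi\colon M\to R$ has image not contained in $\m$. Since $\psi$ then hits a unit it is surjective, and as $R$ is free (hence projective) $\psi$ splits. Reordering the coordinates of $R^{\oplus m}$ we may assume $i=m$, and a choice of splitting of $\psi$ yields an isomorphism $p_0\colon M\to M'\oplus R$ with $M'=\ker(\psi)$ under which $\psi$ becomes the projection onto $R$. Writing $\phi p_0^{-1}\colon M'\oplus R\to R^{\oplus m-1}\oplus R$ as a $2\times2$ block matrix, its bottom row computes $\psi p_0^{-1}$ and is therefore $\left(\begin{smallmatrix}0&1\end{smallmatrix}\right)$; hence $\phi p_0^{-1}=\left(\begin{smallmatrix}a&b\\0&1\end{smallmatrix}\right)=\left(\begin{smallmatrix}1&b\\0&1\end{smallmatrix}\right)\left(\begin{smallmatrix}a&0\\0&1\end{smallmatrix}\right)$ for some $a\colon M'\to R^{\oplus m-1}$ and $b\colon R\to R^{\oplus m-1}$. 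As $\left(\begin{smallmatrix}1&b\\0&1\end{smallmatrix}\right)$ is an automorphism of $R^{\oplus m}$, this presents $\phi$ as equivalent to $\left(\begin{smallmatrix}a&0\\0&1\end{smallmatrix}\right)\colon M'\oplus R\to R^{\oplus m-1}\oplus R$, and in particular $a$ is a monomorphism (any element of $\ker(a)$ would give one of $\ker\left(\begin{smallmatrix}a&0\\0&1\end{smallmatrix}\right)$, hence of $\ker(\phi)$). Applying the induction hypothesis to $a$ produces $0\leq n\leq m-1$, a module $N$, and a monomorphism $g\colon N\to R^{\oplus n}$ with $\im(g)\subseteq\m R^{\oplus n}$ such that $a$ is equivalent to $\left(\begin{smallmatrix}g&0\\0&1\end{smallmatrix}\right)\colon N\oplus R^{\oplus (m-1)-n}\to R^{\oplus n}\oplus R^{\oplus (m-1)-n}$. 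Forming the direct sum of this equivalence with the identity map of $R$ and regrouping the trivial summands via $((m-1)-n)+1=m-n$, I get that $\left(\begin{smallmatrix}a&0\\0&1\end{smallmatrix}\right)$, and hence (by transitivity of equivalence, composites of isomorphisms being isomorphisms) $\phi$ itself, is equivalent to $\left(\begin{smallmatrix}g&0\\0&1\end{smallmatrix}\right)\colon N\oplus R^{\oplus m-n}\to R^{\oplus n}\oplus R^{\oplus m-n}$; and $0\leq n\leq m-1\leq m$.

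For the last assertion: irreducibility is preserved under equivalence (as recalled just before the lemma), so $\phi$ is irreducible if and only if $\left(\begin{smallmatrix}g&0\\0&1\end{smallmatrix}\right)$ is; and Lemma~\ref{p}, applied with the module $X=R^{\oplus m-n}$, shows $\left(\begin{smallmatrix}g&0\\0&1\end{smallmatrix}\right)$ is irreducible if and only if $g$ is. The only steps needing any care are the splitting of $\psi$ (projectivity of $R$), the routine block-matrix bookkeeping, and the transitivity of the equivalence relation; I do not expect a genuine obstacle.
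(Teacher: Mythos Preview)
Your inductive argument is correct. The paper takes a different, non-inductive route: it sets $C=\coker(\phi)$, chooses a \emph{minimal} free cover $\psi'\colon R^{\oplus n}\to C$ (so $\ker(\psi')\subseteq\m R^{\oplus n}$), and embeds this into a commutative diagram with exact rows and columns comparing the given presentation $0\to M\to R^{\oplus m}\to C\to 0$ with $0\to N\to R^{\oplus n}\to C\to 0$; the left column then splits to give $M\cong N\oplus R^{\oplus m-n}$, and $g$ is the inclusion $N=\ker(\psi')\hookrightarrow R^{\oplus n}$. Your approach instead peels off free summands one at a time via the surjective coordinate and block-triangular factorization. The paper's method identifies $n$ intrinsically (as the minimal number of generators of $C$) and produces the decomposition in one stroke, while yours is more elementary---it avoids invoking projective covers and keeps everything at the level of explicit matrix manipulations---at the cost of an induction. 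Both reach the same conclusion and both invoke Lemma~\ref{p} for the final irreducibility statement.
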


\begin{proof}
There is a commutative diagram
$$\xymatrix{
&0\ar[d]&0\ar[d]&&\\
0\ar[r]&N\ar[r]^{g}\ar[d]^f&R^{\oplus n}\ar[r]^{\psi'}\ar[d]_s&C\ar[r]\ar@{=}[d]&0\\
0\ar[r]&M\ar[r]^{\phi}\ar[d]^{t'}&R^{\oplus m}\ar[r]^{\psi}\ar[d]_t&C\ar[r]&0\\
&R^{\oplus m-n}\ar@{=}[r]\ar[d]&R^{\oplus m-n}\ar[d]&&\\
&0&0&&
}
$$
with exact rows and exact columns such that $\psi'$ is minimal, that is, $\ker(\psi')\subseteq \frak mR^{\oplus n}$, for some integer $n$. (Note that here $M\cong N\oplus R^{\oplus m-n}$ such that $N$ does not have any free direct summand.) Therefore, $\im(g)\subseteq \frak mR^{\oplus n}$.
Note that the $R$-module homomorphism $\phi$ is equivalent to the map
$$
\left(\begin{matrix}g&0\\ 0&1\end{matrix}\right)\colon N\oplus R^{\oplus m-n}\to R^{\oplus n}\oplus R^{\oplus m-n}.
$$
Finally, the fact that $\phi$ is irreducible if and only if so is $\left(\begin{smallmatrix}
g&0\\
0&1
\end{smallmatrix}\right)$ if and only if so is $g$ follows from Lemma~\ref{p}.
\end{proof}

\begin{disc}\label{r}
By (the end of) Lemma~\ref{n}, the irreducibility of $\phi$ is
equivalent to the irreducibility of $g$. Thus, replacing $\phi$ with $g$, we will work with
irreducible maps $\phi\colon M\to R^{\oplus m}$ such that
$\im(\phi)\subseteq \m R^{\oplus m}$ in the rest of this section.
\end{disc}

\begin{prop}\label{pp}
Let $(R,xR)$ be a discrete valuation ring, and let $\theta\colon xR\to R$ be the inclusion map.
Let $\phi\colon M\to F$ be a monomorphism of $R$-modules with $F$ free such that $\im(\phi)\subseteq \m F$.
Then the following are equivalent:
\begin{enumerate}[\rm(i)]
\item
$\phi$ is irreducible.
\item
There is a commutative diagram
$$
\xymatrix{
M\ar[r]^{\phi}\ar[d]_{\eta}&F\ar[d]^{\pi}\\
xR\ar[r]^{\theta}&R
}
$$
such that $\eta$ and $\pi$ are isomorphisms.
\end{enumerate}
\end{prop}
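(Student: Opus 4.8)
The plan is to exploit that a discrete valuation ring $R$ is a PID, so that every finitely generated torsion-free $R$-module is free. Since $\phi$ is a monomorphism, $M$ is torsion-free, hence $M\cong R^{\oplus s}$ and $F\cong R^{\oplus m}$ for some integers $s,m\geq 0$. The implication (ii)$\implies$(i) is then immediate: the commutative square exhibits $\phi$ as equivalent (in the sense recalled just before Lemma~\ref{n}) to the inclusion $\theta\colon xR\to R$, and $\theta$ is irreducible by Theorem~\ref{thm16052016a}(a) applied to the non-zero proper ideal $I=xR=\fm$, which is trivially a direct summand of $\fm$; since irreducibility is preserved under equivalence, $\phi$ is irreducible.

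For (i)$\implies$(ii) I would first record two reductions. An irreducible $\phi$ is not a split monomorphism, so $M\neq 0$, and then injectivity of $\phi$ forces $s\geq1$ and $m\geq1$. Moreover, since $\im(\phi)\subseteq\fm F=xF$, every entry of a matrix representing $\phi\colon R^{\oplus s}\to R^{\oplus m}$ lies in $xR$; as $x$ is a nonzerodivisor there is a unique $R$-homomorphism $\phi'\colon R^{\oplus s}\to R^{\oplus m}$ with $\phi=x\phi'$. In particular $\phi$ factors both as $R^{\oplus s}\xra{\phi'}R^{\oplus m}\xra{x}R^{\oplus m}$ and as $R^{\oplus s}\xra{x}R^{\oplus s}\xra{\phi'}R^{\oplus m}$, where $x$ denotes multiplication by $x$.

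The core step is to run irreducibility against these two factorizations. The first gives: $\phi'$ is a split monomorphism or multiplication by $x$ on $R^{\oplus m}$ is a split epimorphism; the latter fails since multiplication by $x$ is not even surjective on $R^{\oplus m}$ (as $m\geq1$), so $\phi'$ is a split monomorphism. The second gives: multiplication by $x$ on $R^{\oplus s}$ is a split monomorphism or $\phi'$ is a split epimorphism; the former fails because a retraction of multiplication by $x$ would make $x$ a unit (as $s\geq1$), so $\phi'$ is a split epimorphism. Hence $\phi'$ is an isomorphism and $s=m$. I expect this ``two factorizations'' manoeuvre to be the only real point of the argument; everything else is bookkeeping.

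It remains to pin down $m=1$ and extract the diagram. Since $\phi=x\phi'$ with $\phi'$ an automorphism of $R^{\oplus m}$, the map $\phi$ is equivalent to multiplication by $x$ on $R^{\oplus m}$, which is equivalent, via the isomorphism $R^{\oplus m}\cong(xR)^{\oplus m}$ given by $v\mapsto xv$, to $\theta^{\oplus m}\colon(xR)^{\oplus m}\to R^{\oplus m}$. As $\phi$ is irreducible, so is $\theta^{\oplus m}$, whence $m=1$ by Lemma~\ref{prop16052016b}. Therefore $\phi$ is equivalent to $\theta\colon xR\to R$, and by the definition of equivalence the witnessing isomorphisms $M\to xR$ and $F\to R$ are precisely the maps $\eta$ and $\pi$ making the square in (ii) commute.
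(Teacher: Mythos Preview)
Your proof is correct and follows essentially the same path as the paper: identify $M$ as free via the structure theory of a DVR, factor $\phi$ as $x\phi'$, use irreducibility to force $\phi'$ to be an isomorphism, and then invoke Lemma~\ref{prop16052016b} to conclude $m=1$. The one notable variation is in the middle step. The paper uses a single factorization $R^{\oplus n}\xra{x}R^{\oplus n}\xra{B}R^{\oplus m}$ to get that $B$ is a split epimorphism, then combines this with the rank inequality $m\ge n$ coming from injectivity of $\phi$ and appeals to \cite[Theorem~2.4]{M} to conclude $B$ is an isomorphism. Your two-factorization manoeuvre (writing $\phi$ both as $x\circ\phi'$ and as $\phi'\circ x$) directly yields that $\phi'$ is simultaneously a split monomorphism and a split epimorphism, hence an isomorphism, with no outside citation or separate rank bookkeeping needed. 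This is a mild simplification; otherwise the arguments coincide.
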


\begin{proof}
(ii)$\implies$(i):
Assume that there is a commutative diagram
$$
\xymatrix{
M\ar[r]^{\phi}\ar[d]_{\eta}&F\ar[d]^{\pi}\\
xR\ar[r]^{\theta}&R
}
$$
such that $\eta$ and $\pi$ are isomorphisms.
It follows from Theorem~\ref{thm16052016a}(a) that $\theta$ is irreducible.
The above commutative diagram shows that $\phi$ is also irreducible.

(i)$\implies$(ii):
Since $R$ has global dimension one, $M$ is a free $R$-module.
Let $m:=\rank_RF$ and $n:=\rank_RM$.
As $\phi$ is injective, we have $m\ge n$.

Let $A$ be a representation matrix of $\phi$, which is an $m\times n$ matrix over $R$.
Since $\psi$ is minimal, each component of $A$ is an element of $xR$, whence there is another $m\times n$ matrix $B$ such that $A=xB$.
Hence, there is a factorization $R^{\oplus n}\xra{x}R^{\oplus n}\xra{B}R^{\oplus m}$ of $A$.
Irreducibility of $\phi$ implies that either $R^{\oplus n}\xra{x}R^{\oplus n}$ is a split monomorphism or $R^{\oplus n}\xra{B}R^{\oplus m}$ is a split epimorphism.
In the former case, we see that $R\xra{x}R$ is also a split monomorphism, which is impossible.
Hence, $R^{\oplus n}\xra{B}R^{\oplus m}$ is a split epimorphism. In particular, this says that $n\ge m$.
Therefore, we have $m=n$, and $R^{\oplus n}\xra{B}R^{\oplus m}$ is an isomorphism by~\cite[Theorem 2.4]{M}.
It follows that there is a commutative diagram
$$
\xymatrix{
M\ar[r]^{\phi}\ar[d]_{\lambda}&F\ar[d]^{\pi}\\
R^{\oplus n}\ar[r]^{x}&R^{\oplus n}
}
$$
such that $\lambda$ and $\pi$ are isomorphisms.
In view of Lemma~\ref{prop16052016b} we have $n=1$, and get a commutative diagram
$$
\xymatrix{
M\ar[r]^{\phi}\ar[d]_{\eta}&F\ar[d]^{\pi}\\
xR\ar[r]^{\theta}&R
}
$$
where $\eta$ is the composition $M\xra{\lambda}R\xra{x}R$. This completes the proof.
\end{proof}

\begin{disc}\label{disc20170618a}
(a) Let $0 \to L \xra{f} M \xra{g} N \to 0$
be a split exact sequence of $R$-modules. Then
there is an isomorphism $h\colon M\to L\oplus N$ such that $g=(\begin{matrix}0&1\end{matrix})h$.
Indeed, there is a homomorphism $\ell: M\to L$ such that $\ell f=1_L$. Set $h=(\begin{matrix}\ell&g\end{matrix})$.

(b) Let $F$ be a free $R$-module of rank $r$ and $a\colon F\to R^{\oplus r}$ be an isomorphism. Let $\pi\colon F\to R$ be an arbitrary surjection and set $b:=\pi a^{-1}$. Suppose that $p=(\begin{matrix}0&\ldots&0&1\end{matrix})\colon R^{\oplus r}\to R$ is the $r$-th projection. Since $b$ is a split epimorphism, applying part (a) to $M=R^{\oplus r}$ and $N=R$ with $g=b$ we obtain an automorphism $c\colon R^{\oplus r}\to R^{\oplus r}$ such that $pc=b$. Setting $q:=ca$ we have a commutative diagram
$$
\xymatrix{
F\ar[r]^{\pi}\ar[d]_{q}&R\ar@{=}[d]\\
R^{\oplus r}\ar[r]^{p}&R
}
$$
of $R$-modules in which $q$ is an isomorphism. This shows that $\pi$ and $p$ are equivalent.

\end{disc}

\begin{para}[Proof of Theorem~\ref{thm16052016a}, Part (b)]\label{para20170617b}
We prove the theorem step by step.

Step 1: Fix a proper submodule $D$ of $C:=\coker(\phi)$ and consider the pull-back diagram:
$$\xymatrix{
&&0\ar[d]&0\ar[d]&\\
0\ar[r]&M\ar[r]^{\alpha}\ar@{=}[d]&E\ar[r]\ar[d]_{\beta}&D\ar[r]\ar[d]&0\\
0\ar[r]&M\ar[r]^{\phi}&F\ar[r]^{\psi}\ar[d]&C\ar[r]\ar[d]&0\\
&&C/D\ar@{=}[r]\ar[d]&C/D\ar[d]&\\
&&0&0&
}
$$
Since $\phi$ is irreducible, either $\alpha$ is a split monomorphism or $\beta$ is a split epimorphism.
If $\beta$ is a split epimorphism, then $\beta$ must be an isomorphism, which implies $C=D$. This is a contradiction because we assumed that $D$ is a proper submodule of $C$.
Hence, $\alpha$ has to be a split monomorphism.

Step 2: Let $D$ be the submodule $\m C$ of $C$.
Since $\psi$ is minimal, the dimension of the $k$-vector space $C/\m C$ is equal to $r:=\rank_RF$.
From the middle column we observe that $E\cong \m^{\oplus r}$.
By Step 1, $M$ is isomorphic to a direct summand of $\m^{\oplus r}$.

Step 3: Let $D$ be a maximal submodule of $C$, that is, $C/D\cong k$.
Then it is observed that $E$ is isomorphic to $R^{\oplus r-1}\oplus\m$.
Hence, $M$ is isomorphic to a direct summand of $R^{\oplus r-1}\oplus\m$.

Step 4: Suppose that $R$ is isomorphic to a direct summand of $\m$.
Then it follows from~\cite[Corollary 1.3]{D} that $R$ is regular.
Hence, $R$ is a domain which forces $\m$ to be indecomposable.
Therefore, $\m$ is isomorphic to $R$, which means that $R$ is a discrete valuation ring.
In light of Proposition~\ref{pp}, we have now both of the conclusions (b1) and (b2) in case that $R$ is isomorphic to a direct summand of $\m$.

From now on, we assume that $R$ is not isomorphic to a direct summand of $\m$.

Step 5: Since $R$ is assumed to be Henselian, we can apply the Krull-Schmidt theorem. (See~\cite[Proposition 1.18]{yoshino}.)
It follows from Step 2 that $M$ does not contain a non-zero free summand; note here that $R$ is indecomposable as an $R$-module.
By Step 3 we see that $M$ is isomorphic to a direct summand of $\m$.
This shows Part (b1) of the theorem.

Step 6: To prove Part (b2), suppose that $M$ is indecomposable.
As $R$ is Henselian, $\End_R(M)$ is a local ring.
For each $1\leq i\leq r$ let $$p_i=(\begin{matrix}0&\ldots&0&1&0&\ldots&0\end{matrix})\colon R^{\oplus r}\to R$$ be the $i$-th projection (the $i$-th entry of $p_i$ is $1$). As we see in Remark~\ref{disc20170618a}(b), there is an isomorphism $q\colon F\to R^{\oplus r}$ such that the diagram
$$
\xymatrix{
F\ar[r]^{\pi}\ar[d]_{q}&R\ar@{=}[d]\\
R^{\oplus r}\ar[r]^{p_r}&R
}
$$
is commutative.

Now for each $1\leq i\leq r$ set $h_i:=p_i q \phi$ and note that $h_r=\pi\phi$. We have $q\phi=(\begin{matrix}h_1&h_2&\ldots&h_r\end{matrix})^{tr}$ and $q\phi$ is irreducible.
Lemma~\ref{prop16052016a} then implies that the composition $h_r=\pi\phi$ is irreducible.
In particular, it is not an epimorphism, so its image is contained in $\m$.
Thus, $\pi\phi$ has a factorization $M\xrightarrow{\eta}\m\xrightarrow{\theta}R$.
Irreducibility of $\pi\phi$ implies that $\eta$ is a split monomorphism.
We now have a commutative diagram
$$
\xymatrix{
M\ar[r]^{\phi}\ar[d]_{\eta}&F\ar[d]^{\pi}\\
\fm\ar[r]^{\theta}&R
}
$$
and the proof is completed. \qed
\end{para}

\begin{disc}
A natural question to ask is the following:
Let $I$ be an ideal of $R$, and for $1\le i\le n$ let $f_i\colon I\to R$ be an irreducible monomorphism.
Let $f=\left(\begin{matrix}f_1& f_2& \dots&f_n\end{matrix}\right)^{tr}\colon I\to R^{\oplus n}$.
When is $f$ irreducible?

Note that under the above assumptions, the map $f$ is not necessarily irreducible.
For example, if $n=2$ and $f_2=af_1$ for some unit element $a\in R$, then there is a factorization
$$
\xymatrix{
& R\ar[rd]^{\binom{1}{a}}\\
I\ar[rr]^{\left(\begin{smallmatrix}f_1\\ f_2\end{smallmatrix}\right)}\ar[ru]^{f_1} & & R^{\oplus2}
}
$$
which shows that $\left(\begin{smallmatrix}f_1\\f_2\end{smallmatrix}\right)\colon I\to R^{\oplus2}$ is not irreducible.
\end{disc}

\section{Irreducible epimorphisms from free modules}\label{sec05062016c}

This section is entirely devoted to the proof of Theorem~\ref{thm16052016b}.
The proof of Part (a) is also given in~\cite{LJ}. However, we include it for the convenience of the reader.

\begin{para}[Proof of Theorem~\ref{thm16052016b}]\label{para20170617c}
(a) If $M$ is free, then $\phi$ splits, which is a contradiction. Hence, $M$ is not free, and therefore $\Ext^1_R(M,k)\neq 0$. Let
$0\to k\to L\xra{h}M\to 0$ be a non-split short exact sequence of finitely generated $R$-modules in $\Ext^1_R(M,k)$.
Since $F$ is free there exists a homomorphism $g\colon F\to L$ such that $hg=\phi$. Since $\phi$ is irreducible and $h$ is not a split epimorphism, $g$ is a split monomorphism. Hence, there exist a finitely generated $R$-module $L'$ such that $L\cong F\oplus L'$ and a commutative diagram
$$\xymatrix{
&0\ar[d]&0\ar[d]&&\\
0\ar[r]&\ker(\phi)\ar[r]\ar[d]_t&F\ar[r]^\phi\ar[d]_g&M\ar[r]\ar@{=}[d]&0\\
0\ar[r]&k\ar[r]\ar[d]&L\ar[r]^h\ar[d]&M\ar[r]&0\\
&\coker(t)\ar[r]^{\cong}\ar[d]&L'\ar[d]&&\\
&0&0&&
}
$$
in which $\ker(\phi)\neq 0$. Since, $\dim_k\ker(\phi)+\dim_k\coker(\phi)=\dim_k k=1$, we conclude that $\dim_k\coker(t)=0$. Hence, $\coker(t)=0$ and $\ker(\phi)=k$, as desired.

(b1) The irreducibility of $\phi\iota$ follows from Lemma~\ref{prop16052016a}.
Hence, by Proposition~\ref{si} it is either surjective or injective.

(b2) By Theorem~\ref{thm16052016b}(a) there is an exact sequence
$0 \to k \xrightarrow{f} R \xrightarrow{\phi\iota} M \to 0$.
Let $x=f(\overline1)$.
Note that $x$ is a non-zero element of $\soc R$.
Let $y$ be any element of $\soc R$.
Then $g\colon k\to R$ given by $g(\overline1)=y$ is a monomorphism.
Considering the push-out diagram
$$
\xymatrix{
0\ar[r]&k\ar[r]^{f}\ar[d]_g&R\ar[r]^{\phi\iota}\ar[d]_{m}&M\ar[r]\ar@{=}[d]&0\\
0\ar[r]&R\ar[r]^{h}&E\ar[r]^{e}&M\ar[r]&0
}
$$
since $\phi\iota$ is irreducible, either $e$ is a split epimorphism or $m$ is a split monomorphism.

If $e$ is a split epimorphism, then
$h$ is a split monomorphism and there is a homomorphism $p\colon E\to R$ such that $ph=1$.
Hence, we have $g=phg=pmf$.
Note that $pm$ is an endomorphism of $R$.
Setting $a=pm(1)\in R$, we get
$$
y=g(\overline1)=pmf(\overline1)=pm(x)=ax.
$$
It follows then that $\soc R=(x)$.

If $m$ is a split monomorphism, then
there is a homomorphism $q\colon E\to R$ such that $qm=1$, and we have $f=qmf=qhg$.
Similarly as above, setting $b=qh(1)$, we get $x=by$.
Since $x$ is non-zero and $y$ is a socle element, $b$ must be a unit of $R$.
Hence, $y=b^{-1}x$ and $\soc R=(x)$.

So, if $e$ is a split epimorphism or $m$ is a split monomorphism, then we have $\soc R=(x)\cong k$, and $R$ has type one.
There is an isomorphism $\rho\colon R/\soc R\to M$ such that the diagram
$$
\xymatrix{
R\ar[rr]^{\phi\iota}\ar@{=}[d]&&M\\
R\ar[rr]^{\pi}&&R/\soc R\ar[u]^{\rho}
}
$$
commutes.
Using Theorem~\ref{thm16052016b}(a), we obtain a commutative diagram
$$
\xymatrix@C=3.6em@R=2.5em{
0\ar[r]&k\ar[r]^{f}\ar[d]_{\gamma}&R\ar[r]^{\pi}\ar[d]_{\iota}&R/\soc R\ar[r]\ar[d]^{\cong}_{\rho}&0\\
0\ar[r]&k\ar[r]&F\ar[r]^{\phi}&M\ar[r]&0
}
$$
with exact rows.
Since $\iota$ is injective, so is $\gamma$, and hence $\gamma$ is an isomorphism.
Five Lemma then shows that $\iota$ is also an isomorphism, whence $F$ has rank one. \qed
\end{para}

\begin{disc}
We work in the setting of Theorem~\ref{thm16052016b}. By Theorem~\ref{thm16052016b}(b1), the map $\phi\iota$ is either surjective or injective. As we see in Theorem~\ref{thm16052016b}(b2), in case that $\phi\iota$ is surjective, one can conclude that $R$ has type one, $F$ has rank one, and there is a commutative diagram
$$
\xymatrix{
F\ar[rr]^{\phi}&&M\\
R\ar[rr]^{\pi}\ar[u]^{\iota}&&R/\soc R\ar[u]_{\rho}
}
$$
such that $\iota$ and $\rho$ are isomorphisms and $\pi$ is the natural surjection.

Now, a natural question to ask is the following:

What can one conclude if in Theorem~\ref{thm16052016b}(b2) we replace the assumption ``$\phi\iota$ is surjective'' with ``$\phi\iota$ is injective for {\em all} split monomorphisms $\iota\colon R\to F$''?

Note that the assumption ``$\phi\iota$ is injective for {\em all} split monomorphisms $\iota\colon R\to F$'' is equivalent to saying that when we regard $\phi\colon F\to M$ as a surjection $$\left(\begin{matrix}\phi_1&\phi_2&\dots&\phi_m\end{matrix}\right)\colon R^{\oplus m}\to M,$$ all the components $\phi_i\colon R\to M$ are injective.
\end{disc}

\section{Irreducible homomorphisms and vanishing of (co)homology}\label{sec05062016d}

A commutative version of the Auslander-Reiten Conjecture~\cite{AR} states that if $M$ is an $R$-module with
$\Ext^{i}_R(M,M\oplus R)=0$ for all $i\geq 1$, then
$M$ is free. This conjecture has been proven affirmatively in some special cases; see for instance~\cite{AINSW, HL, HSV, JS, NSW, NY, S}.
The next theorem deals with this conjecture and also with a Tor version of it when the $R$-module $M$ admits irreducible homomorphisms described in Theorems~\ref{thm16052016a} and~\ref{thm16052016b}.

\begin{thm}\label{cor230915a}
Let $M$ be an $R$-module and consider the following conditions:
\begin{enumerate}[\rm(i)]
\item
$\Tor^R_i(M,M)=0$ for $i\gg 0$

\item
$\Ext_R^i(M,M)=0$ for $i\gg 0$

\item
$\pd_R M<\infty$

\item
$\id_R M<\infty$

\item
$R$ is a regular ring.
\end{enumerate}
Then (i)-(v) are equivalent under any of the following two conditions:
\begin{enumerate}[\rm(a)]
\item
$\frak m$ is indecomposable and there exists an irreducible monomorphism $M\to F$, where $F$ is free;

\item
there exists an irreducible epimorphism $F\to M$, where $F$ is free.
\end{enumerate}
Moreover, under the equivalent conditions (i)-(v) in these cases we have
\begin{equation}\label{eq270416a}\tag{$\ast$}
\pd_R M= \min\{n\in \mathbb{Z}\mid \Ext^n_R(M,M)\neq 0\ \text{and}\ \Ext^i_R(M,M)=0\ \text{for all $i>n$}\}.
\end{equation}
\end{thm}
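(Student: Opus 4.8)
The plan is to prove, under either hypothesis (a) or (b), the chain of implications $(v)\Rightarrow(iii)\Rightarrow(i)\Rightarrow(v)$ and $(v)\Rightarrow(iv)\Rightarrow(ii)\Rightarrow(v)$, and then $(\ast)$. Here $(v)\Rightarrow(iii),(iv)$ is the standard fact that over a regular local ring every finitely generated module has finite projective and finite injective dimension, and $(iii)\Rightarrow(i)$, $(iv)\Rightarrow(ii)$ hold because finite projective dimension makes $\Tor^R_i(M,-)$ vanish for $i\gg0$ and finite injective dimension makes $\Ext^i_R(-,M)$ vanish for $i\gg0$; so the substance is $(i)\Rightarrow(v)$, $(ii)\Rightarrow(v)$ and $(\ast)$. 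We may assume $R$ is not a field, so $k$ is not free.

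First I would record what the two hypotheses give structurally. Under (b): Theorem~\ref{thm16052016b}(a) yields $0\to k\to F\to M\to 0$ with $F$ free; since $k$ is not free it is not a direct summand of the projective module $F$, so $k\subseteq\fm F$, hence $F\to M$ is a projective cover and $\Omega_R M\cong k$. Under (a): Lemma~\ref{n} reduces us to $\im(\phi)\subseteq\fm F$, and then the pull-back computations in Steps~1--3 of~\ref{para20170617b} (which do not use the Henselian hypothesis) show that $M$ is isomorphic to a direct summand both of $\fm^{\oplus r}$, where $r=\rank_R F$, and of $R^{\oplus r-1}\oplus\fm$, with $M\ne 0$. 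As conditions (i)--(v) are preserved by the faithfully flat map $R\to\comp R$, I would pass to $\comp R$, which is complete, hence has Krull--Schmidt, and where $\fm$ stays indecomposable; $\fm$ is then either free -- whence $R$ is a discrete valuation ring and all five conditions hold -- or a non-free indecomposable, and in the latter case comparing the two summand relations forces $M\cong\fm$. Thus in both cases $M$ is a first syzygy of $k$: either $\Omega_R M\cong k$ (case (b)) or $M\cong\fm=\Omega_R k$ (case (a)).

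For the homological core, use dimension-shifting. In case (b), $0\to k\to F\to M\to 0$ gives $\Tor^R_i(M,M)\cong\Tor^R_{i-1}(k,M)$ and $\Ext^i_R(M,M)\cong\Ext^{i-1}_R(k,M)$ for $i\ge 2$: thus (i) gives $\Tor^R_j(k,M)=0$ for $j\gg0$, i.e.\ $\pd_R M<\infty$, hence $\pd_R k\le\pd_R M+1<\infty$ and $R$ is regular by Auslander--Buchsbaum--Serre; while (ii) gives $\Ext^j_R(k,M)=0$ for $j\gg0$, i.e.\ $\id_R M<\infty$, whence $\Ext^i_R(k,k)\cong\Ext^{i+1}_R(M,k)=0$ for $i\gg0$ and (by minimality of the resolution of $k$) $\pd_R k<\infty$, so $R$ is again regular. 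In case (a), $M\cong\fm=\Omega_R k$ gives $\Tor^R_i(\fm,\fm)\cong\Tor^R_{i+2}(k,k)$ and $\Ext^i_R(\fm,\fm)\cong\Ext^{i+1}_R(k,\fm)$ for $i\ge 1$: so (i) gives $\Tor^R_j(k,k)=0$ for $j\gg0$, hence $\pd_R k<\infty$ and $R$ regular; and (ii) gives $\Ext^j_R(k,\fm)=0$ for $j\gg0$, i.e.\ $\id_R\fm<\infty$. The step I expect to be the main obstacle is precisely deducing regularity of $R$ from $\id_R\fm<\infty$: the sequence $0\to\fm\to R\to k\to 0$ forces $\Ext^i_R(k,R)\cong\Ext^i_R(k,k)$, that is $\mu^i_R(R)=\beta^R_i(k)$, for $i\gg0$, and (comparing minimal injective resolutions) $R_\fp$ is Gorenstein for every non-maximal prime $\fp$; one must upgrade this numerical and generic-Gorenstein information to full regularity of $R$.

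Finally, for $(\ast)$: under (v), $R$ is regular and $d:=\pd_R M<\infty$, so $\Ext^i_R(M,M)=0$ for $i>d$ and it remains to see $\Ext^d_R(M,M)\ne 0$. In case (b) this is vacuous, since $\Omega_R M\cong k$ forces $\pd_R M=1+\pd_R k$, which exceeds the global dimension of $R$ -- impossible when $R$ is regular; thus under (b) the ring is never regular, and the assertion of the theorem there is that all of (i)--(iv) fail. Under (a), with $M\cong\fm$: if $\edim R=1$ then $R$ is a discrete valuation ring, $\fm\cong R$, $d=0$, and $\Ext^0_R(\fm,\fm)=\End_R(\fm)\ne 0$; if $\edim R\ge 2$ then $d=\pd_R\fm=\edim R-1\ge 1$ and $\Ext^d_R(\fm,\fm)\cong\Ext^{d+1}_R(k,\fm)$, which is the top Bass number $\mu^{\id_R\fm}_R(\fm)\ne 0$ because $d+1=\edim R=\depth R=\id_R\fm$.
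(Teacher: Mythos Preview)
Your overall architecture matches the paper's: reduce case~(a) to $M\cong\fm$ via completion and Krull--Schmidt, reduce case~(b) to the exact sequence $0\to k\to F\to M\to 0$, and then dimension-shift. Two points need attention.

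\textbf{Case (a), (ii)$\Rightarrow$(v).} You correctly reach $\id_R\fm<\infty$ and then explicitly flag the remaining step as ``the main obstacle,'' offering only the numerical identity $\mu^i_R(R)=\beta^R_i(k)$ for $i\gg 0$ and generic Gorensteinness. That information alone does not force regularity. The paper closes this gap by invoking Levin--Vasconcelos \cite[Theorem~1.1]{LV}: if $\fm M\ne 0$ has finite injective (or projective) dimension, then $R$ is regular. Applied with $M=R$ this gives exactly $\id_R\fm<\infty\Rightarrow R$ regular.

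\textbf{Case (b), (ii)$\Rightarrow$(v).} Here there is a genuine error, not just a gap. You deduce $\id_R M<\infty$ (correct) and the isomorphism $\Ext^i_R(k,k)\cong\Ext^{i+1}_R(M,k)$ for $i\ge 1$ (correct, from $\Hom_R(-,k)$ applied to $0\to k\to F\to M\to 0$), but then assert $\Ext^{i+1}_R(M,k)=0$ for $i\gg 0$. Finite injective dimension of $M$ controls $\Ext^j_R(-,M)$, not $\Ext^j_R(M,-)$; the vanishing of $\Ext^{i+1}_R(M,k)$ would require $\pd_R M<\infty$, which is what you are trying to prove. The paper takes an entirely different route here (Proposition~\ref{prop010516a}): from $\id_R M<\infty$ the Bass conjecture gives $R$ Cohen--Macaulay; the embedding $k\hookrightarrow F$ forces $\depth R=0$, so $R$ is artinian and $M\cong E_R(k)^{\oplus n}$; a length count $m\ell(R)=1+n\ell(R)$ then yields $\ell(R)=1$, i.e.\ $R$ is a field---contradicting the existence of an irreducible map. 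Thus (iv) (equivalently (ii)) never holds under (b).

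Your observation in the $(\ast)$ paragraph that under~(b) the ring can never be regular (since $\Omega_R M\cong k$ would force $\pd_R M=\dim R+1$) is correct and is essentially the content of Proposition~\ref{prop010516a} rephrased; but note it does not by itself prove (ii)$\Rightarrow$(v), only that (v) fails---you still need an independent argument that (ii) fails, which is what the length computation supplies. For $(\ast)$ itself, the paper simply cites \cite[Proposition~2.4]{Diveris}; your direct verification in case~(a) is fine.
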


Note that under the equivalent conditions in Theorem~\ref{cor230915a}, the equality~\eqref{eq270416a}
follows from~\cite[Proposition 2.4]{Diveris}. To prove Parts (a) and (b), we need the following proposition.


\begin{prop}\label{prop010516a}
Let $R^{\oplus m}\to M$ be an irreducible epimorphism of $R$-modules, where $m$ is a positive integer. If $\id_R M<\infty$,
then $R$ is a field.
\end{prop}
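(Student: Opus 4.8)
\emph{Proof proposal.} Write $F=R^{\oplus m}$. By Theorem~\ref{thm16052016b}(a) applied to the given irreducible epimorphism $\phi\colon F\to M$, the kernel of $\phi$ is isomorphic to $k$, so we have a short exact sequence
$$0\to k\to F\to M\to 0,$$
and clearly $M\neq 0$ (an irreducible homomorphism is never a split epimorphism, so its target is nonzero). The plan is to show that the hypothesis $\id_R M<\infty$, combined with the rigid shape of this sequence, forces $R$ to be Artinian with $M$ injective, and then to finish with a length count in the sequence.

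First I would extract $\depth R=0$ from the displayed sequence: since $\depth_R k=0$ and $\depth_R F=\depth R$, the standard depth estimate $\depth_R k\ge\min\{\depth_R F,\ \depth_R M+1\}$ together with $\depth_R M\ge 0$ (valid because $M\neq 0$) leaves no option but $\depth R=0$. Next, because $M$ is a nonzero finitely generated module with $\id_R M<\infty$, Bass's formula gives $\id_R M=\depth R=0$; that is, $M$ is injective. Now I invoke the structure of finitely generated injective modules over a Noetherian local ring: by Matlis theory the only indecomposable finitely generated injective $R$-module is $E_R(k)$, and $E_R(k)$ is finitely generated exactly when $R$ is Artinian; hence $R$ is Artinian and $M\cong E_R(k)^{\oplus t}$ for some $t\ge 1$.

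Finally I would compare lengths in $0\to k\to F\to M\to 0$. As $R$ is Artinian all three modules have finite length, and Matlis duality gives $\len_R E_R(k)=\len_R R$, so that $\len_R F=m\,\len_R R$ and $\len_R M=t\,\len_R R$. Additivity of length yields $m\,\len_R R=1+t\,\len_R R$, i.e.\ $(m-t)\,\len_R R=1$, whence $\len_R R=1$ and $R=k$ is a field. I expect the only genuine step to be the first deduction — realizing that the sequence $0\to k\to F\to M\to 0$ already pins down $\depth R=0$ — after which Bass's formula, the classification of finitely generated injectives, and the length computation are all routine; the one point to keep an eye on is simply that $M\neq 0$, which legitimizes using $\depth_R M\ge 0$.
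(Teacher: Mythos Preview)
Your argument is correct and follows the same overall arc as the paper's proof: extract the exact sequence $0\to k\to R^{\oplus m}\to M\to 0$ from Theorem~\ref{thm16052016b}(a), conclude $\depth R=0$, deduce that $M$ is injective, identify $M$ with a finite power of $E_R(k)$, and finish with the length count $(m-t)\len_R R=1$.

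The one genuine difference is in how you reach ``$R$ is Artinian.'' The paper first invokes the Bass Conjecture (now a theorem of Peskine--Szpiro/Hochster/Roberts) to get that $R$ is Cohen--Macaulay, and only then combines this with $\depth R=0$ to obtain $\dim R=0$. You instead argue directly that a nonzero finitely generated injective module forces $R$ to be Artinian: among the indecomposable injectives $E_R(R/\fp)$, only $E_R(k)$ can be finitely generated (if $\fp\ne\fm$, any $s\in\fm\setminus\fp$ acts invertibly, contradicting Nakayama), and $E_R(k)$ is finitely generated precisely when $R$ is Artinian. This route is more elementary---it sidesteps a deep homological theorem---at the cost of a short Matlis-theoretic digression. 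Both approaches land in the same place for the length computation. A minor remark: your depth-lemma step is fine, but the conclusion $\depth R=0$ is even more immediate from the observation that $k\hookrightarrow R^{\oplus m}$ exhibits $\fm$ as an associated prime of $R$.
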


\begin{proof}
Since $M$ is non-zero, the ``Bass Conjecture''
Theorem (see for instance~\cite[9.6.2 and 9.6.4 (ii)]{bruns}) shows that $R$ is Cohen-Macaulay. By Theorem~\ref{thm16052016b}, Part (a) there
is an exact sequence
$0 \to k \to R^{\oplus m} \to M \to 0$.
In particular, this says that $\depth R=0$. Hence, $R$ is artinian and $M$ is
injective. Thus, $M$ is isomorphic to a direct sum of copies of the
injective hull $E$ of $k$, say $E^{\oplus n}$, where $n$ is a positive integer. The exact sequence
$0 \to k \to R^{\oplus m} \to E^{\oplus n} \to 0$ then
shows that
$m\ell=1+n\ell$,
where $\ell=\ell_R(R)=\ell_R(E)$. (Here $\ell_R$ denotes the length.) We now have $(m-n)\ell=1$, which
implies that $\ell=1$. This means that $R$ is a field.
\end{proof}

\begin{para}\label{para270416b}{(Proof of Theorem~\ref{cor230915a}).}
(a) Assume that $\frak m$ is indecomposable and $f\colon M\to F$ is an irreducible monomorphism of $R$-modules, where $F$ is free.
Note that conditions (i)-(v) and our assumptions in Part (a) of Theorem~\ref{cor230915a} are preserved under completion.
So, we replace $R$ by its completion in $\fm$-adic topology and assume that $R$ is complete (hence, $R$ is Henselian).

To show that conditions (i)-(v) are equivalent, it suffices to prove that each of (i) and (ii) implies (v). For this, let $g\colon N\to R^{\oplus n}$ be the map obtained by removing from $f$ the identity map of a maximal direct summand, as in Lemma~\ref{n}. Then the $R$-homomorphism
$g$ is irreducible and $N$ is isomorphic to a direct summand of $\m$ by Theorem~\ref{thm16052016a}(b1).
Since $N$ is non-zero and $\fm$ is indecomposable, $N\cong \fm$.

(i)$\implies$(v) From our Tor-vanishing assumption we have $\Tor_{i}^R(N,N)=0$ for all $i\gg 0$. Hence, $\Tor_{i}^R(\fm,\fm)=0$ for all $i\gg 0$.
This implies that $\Tor_{i}^R(k,k)=0$ for all $i\gg 0$, and therefore, $R$ is regular.

(ii)$\implies$(v) From our Ext-vanishing assumption we have $\Ext^{i}_R(N,N)=0$ for all $i\gg 0$. Hence, $\Ext^{i}_R(\fm,\fm)=0$ for all $i\gg 0$.
It follows then that $\Ext^{i}_R(k,\fm)=0$ for all $i\gg 0$, which implies that $\id_R \fm<\infty$. Therefore, $R$ is regular, as desired. (See Levin and Vasconcelos~\cite[Theorem 1.1]{LV}.)

(b) Assume that there exists an irreducible epimorphism $F\to M$, where $F$ is free. Using the short exact sequence $0\to k\to F\to M\to 0$ from Theorem~\ref{thm16052016b}(a), we see that (i) implies (v) and (ii) implies
(iv). Hence, it suffices to show that (iv) implies (v).
This follows from Proposition~\ref{prop010516a}. \qed
\end{para}

\begin{cor}\label{cor270416a}
Let $\depth R\geq 2$, and assume that there exists an irreducible monomorphism $M\to F$, where $F$ is free. Then all of the conditions (i)-(v) from Theorem~\ref{cor230915a} are equivalent.
\end{cor}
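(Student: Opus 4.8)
The plan is to deduce the corollary directly from Theorem~\ref{cor230915a}(a): its hypothesis in case (a) is ``$\fm$ is indecomposable and there exists an irreducible monomorphism $M\to F$ with $F$ free'', so it suffices to show that $\depth R\geq 2$ already forces $\fm$ to be indecomposable as an $R$-module. Granting this, the hypothesis of the present corollary puts us in case (a) of Theorem~\ref{cor230915a}, and the equivalence of (i)--(v) is immediate.

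Thus the real content is the purely ring-theoretic assertion: \emph{$\depth R\geq 2$ implies $\fm$ is an indecomposable $R$-module}. I would prove this by contradiction. Suppose $\fm=I\oplus J$ with $I$ and $J$ nonzero ideals of $R$. Then $IJ\subseteq I\cap J=0$, so any nonzero element of $I$ annihilates the nonzero module $J$ (and symmetrically); in particular every element of $I\cup J$ is a zerodivisor on $R$. If $\depth R=0$ there is nothing to prove, so assume $\depth R\geq 1$ and fix a nonzerodivisor $x\in\fm$. Write $x=a+b$ with $a\in I$, $b\in J$; since $x$ is a nonzerodivisor it lies in neither $I$ nor $J$, so $a\neq 0$ and $b\neq 0$. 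The key computation is that $\fm b\subseteq xR$ while $b\notin xR$: indeed $Ib\subseteq IJ=0$ and $aJ\subseteq IJ=0$, so $xJ=aJ+bJ=bJ=Jb$ and hence $\fm b=Ib+Jb=xJ\subseteq xR$; and if $b=xr$ then $(1-r)b=ar\in I\cap J=0$ forces $ar=0$ and $(1-r)b=0$, which gives $a=0$ when $r$ is a unit and $b=0$ when $r\in\fm$, both impossible. Consequently the image of $b$ in $R/xR$ is a nonzero element killed by $\fm$, so $\depth(R/xR)=0$ and therefore $\depth R=\depth(R/xR)+1=1$, contradicting $\depth R\geq 2$.

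I do not expect any serious obstacle here: the argument is elementary bookkeeping with the two components of $x$ relative to the decomposition $\fm=I\oplus J$, and the only place where $\depth R\geq 1$ (rather than mere decomposability of $\fm$) is actually used is the verification that $b\notin xR$. (If one prefers a homological route, one can instead observe that applying $\Hom_R(-,R)$ to $0\to\fm\to R\to k\to 0$ together with $\Hom_R(k,R)=\Ext^1_R(k,R)=0$ yields $\Hom_R(\fm,R)\cong R$, whence $\End_R(\fm)\cong R$ is local and $\fm$ is indecomposable.) Once $\fm$ is known to be indecomposable, Theorem~\ref{cor230915a}(a) completes the proof.
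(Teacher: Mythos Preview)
Your proof is correct and follows the same overall approach as the paper: reduce to Theorem~\ref{cor230915a}(a) by showing that $\depth R\ge 2$ forces $\fm$ to be indecomposable. The only difference is that the paper obtains this indecomposability by citing \cite[Corollary~3.3]{Ryo1}, whereas you supply a self-contained elementary argument (and an alternative homological one via $\Hom_R(\fm,R)\cong R$); both are valid, and your direct argument has the virtue of making the paper independent of that reference at this point.
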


\begin{proof}
By~\cite[Corollary 3.3]{Ryo1}, we know that $\frak m$ is indecomposable. Now the assertion follows from Theorem~\ref{cor230915a}, Part (a).
\end{proof}

We conclude this section by proving the following result and one of its consequences. The proof is given in~\ref{para20171606a} below. In this theorem, $\gdim$ stands for the Gorenstein dimension of Auslander and Bridger~\cite{auslander}.

\begin{thm}\label{cor20170616a}
Let $M$ be an $R$-module. If $\fm$ is decomposable and $\gdim_R M<\infty$, then the following are equivalent.
\begin{enumerate}[\rm(i)]
\item
$\Tor^R_i(M,M)=0$ for $i\gg 0$

\item
$\Ext_R^i(M,M)=0$ for $i\gg 0$

\item
$\pd_R M<\infty$.
\end{enumerate}
\end{thm}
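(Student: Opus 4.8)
The plan is to isolate the one new ingredient supplied by the hypothesis $\gdim_RM<\infty$ and then reduce everything to known asymptotic-vanishing theorems for rings with decomposable maximal ideal. The implications (iii)$\implies$(i) and (iii)$\implies$(ii) are immediate, since $\pd_RM<\infty$ forces $\Tor^R_i(M,M)=0$ and $\Ext^i_R(M,M)=0$ for all $i>\pd_RM$; so the content is (i)$\implies$(iii) and (ii)$\implies$(iii). I would begin by recording the structure of $R$: writing $\fm=I\oplus J$ with $I$ and $J$ non-zero, one has $IJ\subseteq I\cap J=0$, and the canonical map $R\to(R/I)\times_k(R/J)$ is an isomorphism, so $R$ is a fiber product, and it is non-trivial since $I,J\ne0$ force $R/I\ne k\ne R/J$. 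As in the companion arguments, one may replace $R$ by $\widehat R$ at the outset, since conditions (i)--(iii), the finiteness of $\gdim_RM$, and the decomposability of $\fm$ are all preserved. This places us in the setting of the results of~\cite{NSW} on rings with decomposable maximal ideal.

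For (ii)$\implies$(iii), the key observation is that the definition of finite Gorenstein dimension already gives $\Ext^i_R(M,R)=0$ for all $i>\gdim_RM$. Combined with the assumption $\Ext^i_R(M,M)=0$ for $i\gg0$, this yields $\Ext^i_R(M,M\oplus R)=0$ for all $i\gg0$, and the asymptotic form of the Auslander--Reiten property for rings with decomposable maximal ideal from~\cite{NSW} then upgrades this vanishing to $\pd_RM<\infty$. (If one has only the shape ``$\Ext^i_R(M,N)=0$ for $i\gg0$ implies $\pd_RM<\infty$ or $\id_RN<\infty$'', apply it with $N=M\oplus R$: in the second alternative $\id_RR<\infty$, so $R$ is Gorenstein and $\id_RM<\infty$ is then equivalent to $\pd_RM<\infty$, giving $\pd_RM<\infty$ in either case.)

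For (i)$\implies$(iii), the hypothesis $\gdim_RM<\infty$ is not needed. Since $\Tor^R_i(M,R)=0$ for all $i\ge1$, the assumption $\Tor^R_i(M,M)=0$ for $i\gg0$ is exactly the input of the Tor-analogue of the asymptotic Auslander--Reiten property, which likewise holds for rings with decomposable maximal ideal by~\cite{NSW}; it yields $\pd_RM<\infty$.

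I expect the genuine obstacle to be, not any of the above bookkeeping, but the appeal to~\cite{NSW}: the asymptotic-vanishing statements over rings with decomposable maximal ideal are the substantive point. A self-contained argument would have to reprove them, passing through the explicit form of the minimal free resolution of a module over a fiber product (as studied by Lescot, Dress--Kr\"amer, and others) together with the Golod-type behaviour of such rings, in order to show that a module of infinite projective dimension has nonzero self-$\Ext$ and self-$\Tor$ in every sufficiently large degree; since~\cite{NSW} already establishes what is needed, I would simply cite it. A minor secondary point is the use, after completion, of a dualizing complex to justify the implication ``finite Gorenstein dimension together with finite injective dimension forces $R$ to be Gorenstein'', should the proof of (ii)$\implies$(iii) be routed through that alternative rather than through the self-$\Ext$ formulation.
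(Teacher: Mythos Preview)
Your argument is correct, but it takes a genuinely different route from the paper's own proof. The paper argues by contradiction: assuming $\pd_RM=\infty$, it invokes Takahashi's structure theorem \cite[Theorem~A]{Ryo}, which says that a local ring with decomposable maximal ideal admitting a module of finite G-dimension and infinite projective dimension must be a hypersurface $Q/(q)$ with $Q$ regular. Once $R$ is a hypersurface, the paper finishes with the classical rigidity results: Huneke--Wiegand \cite[Theorem~1.9]{huneke} handles the Tor case, and Lemma~\ref{lem240416a} (from \cite{AINSW}; see also \cite{araya:rdf}) handles the Ext case. Thus the hypothesis $\gdim_RM<\infty$ is used in the paper \emph{only} to access the hypersurface structure via \cite{Ryo}, after which the argument is entirely hypersurface-theoretic.

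Your approach bypasses \cite{Ryo} completely and works directly with the fiber-product description of $R$, citing the asymptotic Ext/Tor vanishing theorems of \cite{NSW}. For (ii)$\Rightarrow$(iii) you use $\gdim_RM<\infty$ in a different (and more elementary) way, namely to obtain $\Ext^i_R(M,R)=0$ for $i\gg0$ so that \cite{NSW} applies to $N=M\oplus R$; for (i)$\Rightarrow$(iii) you observe that the G-dimension hypothesis is not needed at all. This last point is exactly the improvement the authors themselves record in the remark following the theorem, obtained later in \cite{taknass} via fiber products. So your argument is valid and in fact anticipates the strengthened form; the trade-off is that it leans on the substantial machinery of \cite{NSW} (and implicitly \cite{taknass}), whereas the paper's proof reduces to older, more self-contained hypersurface results at the cost of the deeper structural input from \cite{Ryo}. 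One small caution: make sure the precise statements you need---in particular the asymptotic Tor version over fiber products---are actually in \cite{NSW} rather than only in \cite{taknass}.
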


The following lemma is from~\cite[Theorem 6.3]{AINSW}. (See also~\cite[Corollary 4.4]{araya:rdf}.)

\begin{lem}\label{lem240416a}
Assume that $R\cong Q/(q)$ where $Q$ is a regular local ring with $q\in Q$. Then
$\Ext^i_R(M,M)=0$ for all $i\gg 0$ if and only if $\pd_R M<\infty$.
\end{lem}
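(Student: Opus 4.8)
The plan is to reduce the lemma to a periodicity phenomenon over the hypersurface $R\cong Q/(q)$. The implication ``$\pd_R M<\infty\Longrightarrow \Ext^i_R(M,M)=0$ for $i\gg0$'' is immediate, since a module of finite projective dimension has $\Ext^i_R(M,-)=0$ for $i>\pd_R M$, so all the work is in the converse. I would prove it by contraposition: assuming $\pd_R M=\infty$, produce infinitely many $i$ with $\Ext^i_R(M,M)\neq 0$.

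First I would replace $M$ by a high syzygy. Since $R$ is a hypersurface it is Gorenstein, so every finitely generated $R$-module has finite G-dimension, and for $c\geq\depth R$ the syzygy $N:=\Omega^c_R M$ is totally reflexive (maximal Cohen--Macaulay over the Gorenstein ring $R$); it is nonzero and non-free precisely because $\pd_R M=\infty$. A routine dimension-shifting argument, using that $\Ext^{\geq 1}_R(N,R)=0$ (total reflexivity) and $\Ext^{\geq 1}_R(R,-)=0$ (projectivity), produces natural isomorphisms $\Ext^i_R(M,M)\cong\Ext^i_R(\Omega^cM,M)$ and $\Ext^i_R(\Omega^cM,M)\cong\Ext^{i+c}_R(\Omega^cM,\Omega^cM)$ in all large degrees, hence $\Ext^i_R(N,N)\cong\Ext^i_R(M,M)$ for all $i\gg0$. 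In particular the vanishing hypothesis transfers: $\Ext^i_R(N,N)=0$ for $i\gg0$.

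Then I would bring in Tate cohomology. Over the hypersurface $R=Q/(q)$, minimal complete resolutions of totally reflexive modules are $2$-periodic (Eisenbud's matrix-factorization theorem), so $\widehat{\Ext}{}^i_R(N,N)\cong\widehat{\Ext}{}^{i+2}_R(N,N)$ for every $i\in\mathbb{Z}$; moreover $\widehat{\Ext}{}^i_R(N,N)=\Ext^i_R(N,N)$ for $i\geq 1$, while $\widehat{\Ext}{}^0_R(N,N)=\underline{\Hom}_R(N,N)$ is the stable endomorphism group. From $\Ext^i_R(N,N)=0$ for $i\gg0$ and $2$-periodicity I conclude $\widehat{\Ext}{}^i_R(N,N)=0$ for all $i\in\mathbb{Z}$, in particular $\underline{\Hom}_R(N,N)=0$. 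But then $1_N$ factors through a free $R$-module, which forces $N$ to be projective, hence free over the local ring $R$ --- contradicting that $N$ is non-free. Therefore $\pd_R M<\infty$, which finishes the proof.

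I expect the main effort to be bookkeeping rather than a genuine obstacle: carefully setting up the syzygy isomorphism $\Ext^i_R(N,N)\cong\Ext^i_R(M,M)$ in high degrees, and invoking the correct forms of Eisenbud periodicity together with the comparison $\widehat{\Ext}{}^{\geq 1}=\Ext^{\geq 1}$ for totally reflexive modules. (This is consistent with the result being quoted from~\cite{AINSW} and~\cite{araya:rdf} rather than reproved.) An alternative and more uniform route --- valid for arbitrary complete intersections, hence a fortiori for hypersurfaces --- runs through the support-variety dictionary of Avramov and Buchweitz: one has $\Ext^i_R(M,M)=0$ for $i\gg0$ if and only if $V_R(M)\cap V_R(M)=V_R(M)=\{0\}$, which holds if and only if $\pd_R M<\infty$; over a hypersurface the ambient variety is one-dimensional, so $V_R(M)$ is either the origin or the whole line, making this equivalence especially transparent.
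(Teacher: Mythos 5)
Your argument is correct, but note that the paper does not actually prove this lemma: it is simply quoted from \cite[Theorem~6.3]{AINSW} (see also \cite[Corollary~4.4]{araya:rdf}), so there is no internal proof to compare against. The route you take --- pass to a high syzygy $N=\Omega^c_R M$, which is maximal Cohen--Macaulay (hence totally reflexive) over the Gorenstein hypersurface; transport the $\Ext$-vanishing from $M$ to $N$ by dimension shifting using $\Ext^{\geq 1}_R(N,R)=0$; invoke Eisenbud's two-periodicity of complete resolutions over $R\cong Q/(q)$ to conclude that the Tate cohomology $\widehat{\Ext}^i_R(N,N)$ vanishes for every $i\in\mathbb{Z}$; and finally read off $\widehat{\Ext}^0_R(N,N)=\underline{\Hom}_R(N,N)=0$, forcing $1_N$ to factor through a free module and $N$ to be free --- is exactly the standard hypersurface argument and is essentially the one in Araya--Yoshino. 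The support-variety alternative you sketch is the Avramov--Buchweitz formulation, which is more uniform and extends to arbitrary complete intersections, though it is overkill for the hypersurface case at hand. One small hygiene point: either split off any free direct summand of $N$ before asserting strict two-periodicity of its minimal free resolution, or phrase the periodicity at the level of the complete resolution (which is insensitive to free summands); either reading makes your step from ``$\Ext^i_R(N,N)=0$ for $i\gg 0$'' to ``$\widehat{\Ext}^i_R(N,N)=0$ for all $i$'' airtight.
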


\begin{para}[Proof of Theorem~\ref{cor20170616a}]\label{para20171606a}
We prove that if (i) or (ii) holds, then $\pd_R M<\infty$. Note that we can replace $R$ by its completion in the $\fm$-adic topology and assume that $R$ is complete.

Assume on the contrary that $\pd_R M=\infty$. Then by~\cite[Theorem A]{Ryo}, the ring $R$ is Gorenstein and is isomorphic to $Q/(q)$, where $Q$ is a regular local ring with $q\in Q$. Thus, if $\Tor_i^R(M,M)=0$ for all $i\gg 0$, it follows from~\cite[Theorem 1.9]{huneke} that $\pd_R M<\infty$, which is a contradiction.

Also, if $\Ext^i_R(M,M)=0$ for all $i\gg 0$, then by Lemma~\ref{lem240416a} we have $\pd_R M<\infty$, that is again a contradiction.

Hence, under the above assumptions we must have $\pd_R M<\infty$, as desired. \qed
\end{para}

\begin{disc}
After this paper was submitted, the authors were able to prove the equivalence of (i) and (iii) in Theorem~\ref{cor20170616a} without assuming that $M$ has finite G-dimension. The proof uses the notion of fiber products; see~\cite{taknass}.
\end{disc}

Following~\cite{Luo}, a finitely generated indecomposable $R$-module $M$ is called \emph{IG-projective} if $\gdim_R M=0$ and if $M$ admits either an irreducible epimorphism $F\to M$ or an irreducible monomorphism $M\to F$, in which $F$ is $R$-free. An example of IG-projective modules is the module $A/(X)$ over the local ring $A=\mathbb{K}[X]/(X^2)$ in which $\mathbb{K}$ is a field. Note that $A/(X)$ is not $A$-free.

As an immediate corollary of Theorems~\ref{cor230915a} and~\ref{cor20170616a}, we obtain the following result which is~\cite[Theorem 1.1]{LJ}.

\begin{thm}\label{thm240416a}
Let $M$ be an IG-projective $R$-module. Then
$\Ext_R^i(M,M)=0$ for all $i\geq 1$ if and only if $M$ is projective.
\end{thm}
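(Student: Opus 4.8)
The plan is to deduce the statement directly from Theorems~\ref{cor230915a} and~\ref{cor20170616a}, using only the definition of an IG-projective module. The ``if'' direction is immediate, since a projective module over the local ring $R$ is free and hence has $\Ext_R^i(M,M)=0$ for all $i\geq1$. For the converse, assume $\Ext_R^i(M,M)=0$ for all $i\geq1$; in particular $\Ext_R^i(M,M)=0$ for $i\gg0$. It suffices to show $\pd_R M<\infty$: once we know this, then since $\gdim_R M=0$, the Auslander--Bridger equality (namely $\gdim_R M=\pd_R M$ whenever $\pd_R M<\infty$; see~\cite{auslander}) forces $\pd_R M=0$, i.e.\ $M$ is projective.

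To prove $\pd_R M<\infty$ I would distinguish two cases according to the decomposability of $\fm$. If $\fm$ is decomposable, then since $\gdim_R M=0<\infty$ we may apply Theorem~\ref{cor20170616a}: its conditions (i)--(iii) are equivalent, and as condition~(ii) holds by hypothesis we get condition~(iii), that is $\pd_R M<\infty$. If $\fm$ is indecomposable, we use that $M$, being IG-projective, admits either an irreducible epimorphism $F\to M$ or an irreducible monomorphism $M\to F$ with $F$ free: in the first case condition~(b) of Theorem~\ref{cor230915a} is satisfied, while in the second case condition~(a) of Theorem~\ref{cor230915a} is satisfied (here the indecomposability of $\fm$ is used). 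In either case conditions (i)--(v) of Theorem~\ref{cor230915a} are equivalent, so condition~(ii) again yields $\pd_R M<\infty$. In all cases $\pd_R M<\infty$, and hence $M$ is projective by the observation in the previous paragraph.

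I do not expect any genuine obstacle: the result is a repackaging of the two main theorems, and the only point needing a little care is to confirm that the case split is exhaustive. It is: Theorem~\ref{cor20170616a} covers the decomposable case with no reference to irreducible homomorphisms, while for the indecomposable case the existence of an irreducible epimorphism $F\to M$ or an irreducible monomorphism $M\to F$ --- guaranteed by IG-projectivity --- feeds precisely into condition~(b) or condition~(a) of Theorem~\ref{cor230915a}, respectively.
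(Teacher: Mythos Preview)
Your proof is correct and follows exactly the route the paper intends: the paper states the theorem as an immediate corollary of Theorems~\ref{cor230915a} and~\ref{cor20170616a} without further argument, and your case split on the decomposability of $\fm$ together with the reduction to $\pd_R M<\infty$ via $\gdim_R M=0$ is precisely how one unpacks that sentence.
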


\section*{Acknowledgments}
We are grateful to the referee for reading the paper very carefully and for giving
many valuable suggestions that improved the presentation of the paper significantly.

\end{document}